\theoremstyle{plain}
\newtheorem{theorem}{Theorem}[section]
\newtheorem{lemma}[theorem]{Lemma}
\newtheorem{proposition}[theorem]{Proposition}
\newtheorem{corollary}[theorem]{Corollary}
\newtheorem{remark}[theorem]{Remark}
\numberwithin{equation}{section}
\newcommand{\Nb}  {{\mathbb N}}
\newcommand{\Rb}  {{\mathbb R}}
\newcommand{\Xb}  {{\mathbb X}}
\newcommand{\As} {{\mathcal A}}
\newcommand{\Fs} {{\mathcal F}}
\newcommand{\Ps} {{\mathcal P}}
\newcommand{\Ts} {{\mathcal T}}
\newcommand{\Ys} {{\mathcal Y}}
\DeclareMathOperator{\Var}{Var}
\newcommand{\ep}{\varepsilon}
\renewcommand{\phi}{\varphi}
\newcommand{\ind}{1\!\kern-1pt \mathrm{I}}
\newcommand{\rsto}{]\!\kern-1.8pt ]}
\newcommand{\lsto}{[\!\kern-1.7pt [}
\newcommand\F{\mbox{I\kern-2pt F}}
\title[Asymptotic proportion of arbitrage points]{Asymptotic proportion of arbitrage points in fractional binary markets}
\author{Fernando Cordero}
\address{Faculty of Technology, University of Bielefeld, Universit\"{a}tsstr. 25, 33615 Bielefeld, Germany}
\email{fcordero@techfak.uni-bielefeld.de}
\author{Irene Klein}
\address{Department of Statistics and Operations Research, University of Vienna, Oskar-Morgensternplatz 1, A-1090
Vienna, Austria}
\email{irene.klein@univie.ac.at}
\author{Lavinia Perez-Ostafe}
\address{Department of Mathematics, ETH Zurich, R\"{a}mistrasse 101, 8092 Zurich, Switzerland}
\email{lavinia.perez@math.ethz.ch}
\thanks{The third author gratefully acknowledges financial support from the Austrian Science Fund (FWF) under grant J3453-N25}
\date{\today}%
\begin{document}
\begin{abstract}
A fractional binary market is a binary model approximation for the fractional Black-Scholes model, which Sottinen constructed with the help of a Donsker-type theorem. In a binary market the non-arbitrage condition is expressed as a family of conditions on the nodes of a binary tree. We call ``arbitrage points'' the nodes which do not satisfy such a condition and ``arbitrage paths'' the paths which cross at least one arbitrage point. In this work, we provide an in-depth analysis of the asymptotic proportion of arbitrage points and arbitrage paths. Our results are obtained by studying an appropriate rescaled disturbed random walk.
\end{abstract}
\subjclass[2010]{60F05, 60F20, 60G22, 60G50, 91B26}
\keywords{Fractional Brownian motion, fractional binary markets, binary markets, arbitrage opportunities}
\maketitle

\section{Introduction}
In the classical theory of mathematical finance a crucial role is played by the notion of \textit{arbitrage}, which is the cornerstone of the option pricing theory that goes back to F.~Black, R.~Merton and M.~Scholes \cite{Bl:Sch}. 
In the case of binary markets, the absence of arbitrage is completely characterized by Dzhaparidze in \cite{Dzh}. Intuitively, a binary market is a market in which the stock price process $(S_n)_{n=0}^N$ is an adapted stochastic process with strictly positive values and such that at time $n$ the stock price evolves from $S_{n-1}$ to either $\alpha_n\, S_{n-1}$ or $\beta_n\, S_{n-1} $, where $\beta_n <\alpha_n$.

One advantage of working with binary markets is given, on one hand, by their simplicity and, on the other hand, by their flexibility to approximate more complicated models. In particular, this is possible for Black-Scholes type markets that are driven by a process, for which we dispose of a random walk approximation. Examples of this are the fractional Brownian motion and the Rosenblatt process, as one can see in \cite{Sotti} and \cite{Totu} respectively.

In this paper we provide an in-depth analysis of fractional binary markets, which are defined by Sottinen \cite{Sotti} as a sequence of binary models approximating the fractional Black-Scholes model, i.e.~a Black-Scholes type model where the randomness of the risky asset comes from a fractional Brownian motion. Along this work we assume that the Hurst parameter $H$ is strictly bigger than $1/2$. In this case, the fractional Brownian motion exhibits self-similarity and long-range dependence, properties that were observed in some empirical studies of financial time series (see \cite{Con} and \cite{WiTaTe}). 
Since the fractional Brownian motion fails to be a semimartingale, the fractional Black-Scholes model admits arbitrage opportunities, a drawback that can be corrected if, e.g. one introduces transaction costs.

In \cite{Sotti} Sottinen constructs the fractional binary markets by giving an analogue of the Donsker theorem, where the fractional Brownian motion is approximated in distribution by a ``disturbed'' random walk. Sottinen proves that the arbitrage opportunities do not only appear in the limiting model, but also in the sequence of fractional binary markets. 

According to \cite{Dzh}, in a binary market, the absence of arbitrage can be written as a family of conditions on the nodes of a binary tree. We call an ``arbitrage point'' a node in the binary tree which does not satisfy the corresponding non-arbitrage condition. An ``arbitrage path'' is a path that crosses at least one arbitrage point. By \cite{Sotti} we know that, for each fractional binary market in the sequence, the associated set of arbitrage points is not empty.

The study of the set of arbitrage points provides a way to quantify arbitrage, a research direction which goes a step further than the classical question of existence of arbitrage. 

The aim of this paper is to study qualitative and quantitative properties of the sets of arbitrage points and paths for the fractional binary market. First, we prove that starting from any point in the binary tree we reach an arbitrage point by going enough times only up or only down (Proposition \ref{arbpt}). This generalizes the result of Sottinen, who showed the existence of arbitrage starting only from the root of the tree. This gives information about the structure of the set of arbitrage points and implies that its cardinality is asymptotically infinite. Next, we study the limit behaviour of the proportion of arbitrage points. The latter is expressed in terms of a rescaled random walk, which we show converges in law. The characterization of the asymptotic proportion of arbitrage points then follows (Theorem \ref{proparbpt}). We also take a closer look to the previous limit when $H$ tends to $1/2$ and when $H$ tends to $1$ (Proposition \ref{proparbptH}). Finally, making use of the $0-1$ Kolmogorov law, we show that when $H$ is close to $1$, a.s. a path in the binary tree crosses an infinite number of arbitrage points, and when $H$ is close to $1/2$, a.s. a path in the binary tree crosses an infinite number of non-arbitrage points (Theorem \ref{asp}).

We believe that our asymptotic results open a way to a better understanding of the arbitrage behaviour in the limiting market. Since the proportion of arbitrage points remains strictly positive in the limit, one could expect that the sequence of sets of arbitrage points converges in a proper way to a set encoding the arbitrage structure of the fractional Black-Scholes model.

Another possible direction, in which our results may turn useful is the study of arbitrage in the fractional binary markets under transaction costs.
As mentioned by Sottinen, one may expect that the arbitrage disappears when transaction costs are taken into account. This latter problem was treated in its most generality in \cite{CKO}, where a characterization of the smallest transaction cost (called ``critical'' transaction costs) starting from which the arbitrage is eliminated is provided. However, since the parameters of the model depend on time and space, this characterization does not give a closed-form solution, but reduces to solving an optimization problem in a binary tree. The complexity of this problem increases with the number of arbitrage points, and, hence, the understanding of qualitative and quantitative properties gives us an insight to this more complicated problem. 

The paper is organized as follows. In Section \ref{Sec2}, we recall the fractional binary markets as defined in \cite{Sotti}. In Section \ref{Sec3}, we introduce the notion of arbitrage point and arbitrage path and we state our main results: Proposition \ref{arbpt}, Theorem \ref{proparbpt}, Proposition \ref{proparbptH} and Theorem \ref{asp}. The remaining of the paper is devoted to their proofs. In Section \ref{Sec4}, we prove that the parameters of the fractional binary markets satisfy a scaling property. This helps us to get rid of the dependence on the size of the fractional binary market. We finish this section with the proof of Proposition \ref{arbpt}. In Section \ref{Sec5}, we relate the proportion of arbitrage points with a rescaled random walk and study its limit behaviour. Finally, we prove Theorem \ref{proparbpt} and Proposition \ref{proparbptH}. Section \ref{Sec6} contains the proof of our last main result, Theorem \ref{asp}, concerning the asymptotic proportion of arbitrage paths. We end the paper with an Appendix enclosing the most technical auxiliary results needed along this work.

\section{Fractional binary markets}\label{Sec2}
Sottinen introduces in \cite{Sotti} the fractional binary markets as a sequence of binary markets approximating the fractional Black-Scholes model. The latter is a Black-Scholes type model where the randomness of the risky asset is determined by a fractional Brownian motion. More precisely, the dynamics of the bond and stock are given by:
\begin{equation}\label{fbse}
 dB_t=r(t)\,B_t\, dt\quad\textrm{and}\quad dS_t^H=(a(t)dt+\sigma\,dZ^H_t)\, S_t^H,\quad t\in[0,1],
\end{equation}
where $\sigma>0$ is a constant representing the volatility and $Z^H$ is a fractional  Brownian motion of Hurst parameter $H\in(1/2,1)$. In this case, the increments of the fractional Brownian motion are positively correlated and exhibit long-range dependence, see e.g. \cite{MVN} and \cite{Novavi}. The functions $r$ and $a$ are deterministic and represent the interest rate and the drift of the stock, respectively. We assume moreover that the interest rate is constant equal to 0 and that the drift $a$ is continuous.

For each $N>1$, we introduce the following market, called $N$-period fractional binary market, which converges to the fractional Black-Scholes model \eqref{fbse} as shown in \cite{Sotti}. Let $(\Omega,\Fs, P)$ be a finite probability space. The bond and the stock are traded at the times $\{0, \frac{1}{N},...,\frac{N-1}{N}, 1\}$ under the dynamics:
$$B_n^{(N)}=1\quad\textrm{and}\quad S_n^{(N,H)}=\left(1+a_n^{(N)}+X_n^{(N,H)}\right)\, S_{n-1}^{(N,H)},\quad n\geq1.$$
The initial values are $B_0^{(N)}=1$ and $S_0^{(N,H)}=s_0$, with $s_0$ a positive constant. Here, $B_n^{(N)}$ and $S_n^{(N,H)}$ are understood to be the value of $B^{(N)}$ and $S^{(N,H)}$ in the time interval $[\frac{n}{N},\frac{n+1}{N})$ for each $n\in\{0,\ldots,N-1\}$. The drift $a^{(N)}$ is used to approximate the continuous drift given in \eqref{fbse} via
$$a_n^{(N)}=\frac{1}{N}a(n/N)$$ 
and therefore, for all $N$,
\begin{equation}\label{ea}
 |a_n^{(N)}|\leq \frac{{||a||}_\infty}{N},\qquad n\in\{1,...,N\}.
\end{equation}
Using the construction of Sottinen, we can express $X_n^{(N,H)}$ as:
\begin{equation}\label{proX}
X_n^{(N,H)}=\sum\limits_{i=1}^{n-1}J_n^{(N,H)}(i)\,\xi_i+g_n^{(N,H)}\,\xi_n.
\end{equation}
The random variables ${(\xi_i)_{i\geq 1}}$ are supposed to be i.i.d.~Bernoulli, i.e.~$P(\xi_1=-1)=P(\xi_1=1)=1/2$. The real numbers $J_n^{(N,H)}(i)$, for $1\leq i< n\leq N$, and $g_n^{(N,H)}$, for $1\leq n\leq N$, are defined below. We endow the probability space with the following filtration $\Fs_n=\sigma(\xi_1,\ldots,\xi_n)$, for $n\geq1$, and $\Fs_0=\{\emptyset,\Omega\}$, and therefore the process $S_n^{(N,H)}$ is adapted. 

Now, we define the constants:
 \begin{equation}\label{d2j}
  J_n^{(N,H)}(i):=\sigma\sqrt{N}\int\limits_{\frac{i-1}{N}}^{\frac{i}{N}}\left(k_H\left(\frac{n}{N},u\right)-k_H\left(\frac{n-1}{N},u\right)\right)du,
 \end{equation}
and
\begin{equation}\label{d2g}
 g_n^{(N,H)}:=\sigma\sqrt{N}\int\limits_{\frac{n-1}{N}}^{\frac{n}{N}}k_H\left(\frac{n}{N},u\right)du,
\end{equation}
where
\begin{equation}\label{dk}
 k_H(t,s):=c_H\left(H-\frac{1}{2}\right) s^{\frac{1}{2}-H}\int\limits_s^t u^{H-\frac{1}{2}} {(u-s)}^{H-\frac{3}{2}}du,
\end{equation}
and $c_H:=\sqrt{\frac{2H\,\Gamma\left(\frac{3}{2}-H\right)}{\Gamma\left(H+\frac{1}{2}\right)\,\Gamma(2-2H)}}$ is a normalizing constant. For simplicity, we will use from time to time the notation $C_H:=c_H\, \left(H-\frac{1}{2}\right)$.
We also define, for each $n\in\{1,...,N\}$, the functions $Y_{n}^{(N,H)}:\{-1,1\}^{n-1}\rightarrow\Rb$ by:
$$Y_{n}^{(N,H)}(x_1,...,x_{n-1}):=\sum\limits_{i=1}^{n-1}J_n^{(N,H)}(i)\,x_i, \quad Y_1^{(N,H)}=0.$$
We shortly denote $Y_{n}^{(N,H)}$ the random variable $Y_{n}^{(N,H)}(\xi_1,...,\xi_{n-1})$. In particular, from \eqref{proX} we have the following identity:
$$X_n^{(N,H)}=Y_{n}^{(N,H)}+g_n^{(N,H)}\,\xi_n,$$
where the first process denotes the contribution of the past (up to time $n-1$) and the second one depends only on the present (at time $n$). 
The following functions on ${\{-1,1\}}^{n-1}$ are introduced:
$$u_n^{(N)}(x_1,...,x_{n-1}):=Y_{n}^{(N,H)}(x_1,...,x_{n-1})+g_n^{(N,H)},$$
and
$$d_n^{(N)}(x_1,...,x_{n-1}):=Y_{n}^{(N,H)}(x_1,...,x_{n-1})-g_n^{(N,H)},$$
with $u_1^{(N)}$ and $d_1^{(N)}$ constants. Thus, given the history up to time $n-1$, the process $X^{(N,H)}$ can take at each time $n$ only two possible values $u_n^{(N)}$ and $d_n^{(N)}$ with $d_n^{(N)}<u_n^{(N)}$. This justifies the binary structure of these markets. 
\section{Arbitrage points and main results}\label{Sec3}
In this section we introduce the notions of arbitrage points and arbitrage paths and formulate our main results concerning their asymptotic properties.

We know from \cite{Dzh} (see \cite{CRR} for the binomial case) that the $N$-period fractional binary market excludes arbitrage opportunities if and only if for all $n\in\{1,...,N\}$ and $x\in{\{-1,1\}}^{n-1}$, we have:
\begin{equation}\label{nac1}
d_n^{(N)}(x)<-a_n^{(N)}< u_n^{(N)}(x).
\end{equation}
The previous characterization of the arbitrage opportunities in the fractional binary market motivates the next definitions. We call the following set $N$-binary tree:
$$\Xb_N=\{\tau\}\cup\bigcup\limits_{n=1}^{N-1} \{-1,1\}^{n},$$
where $\tau$ denotes the root of the tree. We say that a point $x\in\Xb_N$ is an arbitrage point for the $N$-period fractional binary market if  $x$ does not satisfy condition \eqref{nac1}. More precisely, given a level $n\in\{1,...,N\}$, the following object is called the set of arbitrage points at level $n$:
$$\As_n^{(N,H)}:=\{x\in {\{-1,1\}}^{n-1}: u_n^{(N)}(x)\leq -a_n^{(N)} \textrm{ or } d_n^{(N)}(x)\geq -a_n^{(N)}\},\quad n\geq 2,$$
and $\As_1^{(N,H)}$ is equal to $\{\tau\}$ if $u_1^{(N)}\leq -a_1^{(N)}$ or $d_1^{(N)}\geq -a_1^{(N)}$ and the empty set otherwise. The set of arbitrage points is given by:
$$\As^{(N,H)}:=\bigcup\limits_{n=1}^N\As_n^{(N,H)}\subseteq\Xb_N.$$
In addition, we call arbitrage paths the paths in the $N$-binary tree which cross at least one arbitrage point, i.e. the elements of the set:
$$\As_\Ps^{(N,H)}:=\{(x_1,...,x_{N-1})\in{\{-1,1\}}^{N-1}: \exists n\in\{1,...,N\},\, (x_1,...,x_{n-1})\in \As_n^{(N,H)}\}.$$
\begin{remark}
Sottinen proves in \cite{Sotti} that for $N$ large enough, the $N$-period fractional binary market admits arbitrage opportunities. Indeed, it is proved that there exists $n_H>0$ such that for all $N$ sufficiently large:
$$\As_{n_H}^{(N,H)}\neq\emptyset\quad\textrm{and}\quad\lim\limits_{N\rightarrow\infty}\frac{\arrowvert\As_\Ps^{(N,H)}\arrowvert}{2^N}\geq 2^{2-n_H}>0.$$
\end{remark}

Now, we formulate our first main result. In \cite{Sotti}, Theorem 5, the author shows that starting from the root of the binary tree and going always up we can always reach an arbitrage point. The following proposition provides a generalization of that result, establishing that starting from any point in the binary tree by going always up (or always down) we can always reach an arbitrage point. In what follows we use the notation $1_k:=(1,\ldots,1)\in\Rb^k$. 
\begin{proposition}\label{arbpt}
 For all $k\geq 2$ and $x\in\{-1,1\}^{k-1}$, there exist $n_k(x)\geq 1$ and $N_k(x)\geq k+n_{k}(x)$ such that for all $N\geq N_{k}(x):$
$$\left(x,1_{n_k(x)}\right)\in\As_{k+n_k(x)}^{(N,H)}\,\textrm{ and } \left(x,-1_{n_k(x)}\right)\in\As_{k+n_k(x)}^{(N,H)}.$$
In particular, 
$$\lim_{N\rightarrow\infty}\arrowvert\As^{(N,H)}\arrowvert=\infty.$$
\end{proposition}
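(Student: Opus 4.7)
The plan is to exploit a scaling property of the parameters---to be established at the start of Section~\ref{Sec4}, namely $J_n^{(N,H)}(i)=\sigma N^{-H}\tilde J_n^H(i)$ and $g_n^{(N,H)}=\sigma N^{-H}\tilde g_n^H$ for certain $N$-independent positive quantities $\tilde J_n^H(i)$ and $\tilde g_n^H$---to reduce the arbitrage condition to an $N$-independent strict inequality, and then to show by asymptotic analysis that extending $x$ by sufficiently many $+1$'s (resp.~$-1$'s) forces this inequality. Because $|a_n^{(N)}|\leq\|a\|_\infty/N=o(N^{-H})$ when $H<1$, the drift contribution is of lower order than $J$ and $g$, so any strict scaled inequality will produce a genuine arbitrage condition once $N$ is large enough, yielding both $n_k(x)$ and $N_k(x)$.

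After this scaling, the ``down-type'' arbitrage condition $d_{k+m}^{(N)}(x,1_m)\geq -a_{k+m}^{(N)}$ at $(x,1_m)$ becomes, after dividing through by $\sigma N^{-H}$,
\begin{equation*}
\sum_{i=k}^{k+m-1}\tilde J_{k+m}^H(i)+\sum_{i=1}^{k-1}\tilde J_{k+m}^H(i)\,x_i-\tilde g_{k+m}^H\;\geq\;-\tfrac{N^H}{\sigma}a_{k+m}^{(N)},
\end{equation*}
with right-hand side $O(N^{H-1})\to 0$. Using $x_i\geq -1$, it then suffices to exhibit some $m=n_k(x)\geq 1$ with
\begin{equation*}
T_{k,m}\;:=\;\sum_{i=k}^{k+m-1}\tilde J_{k+m}^H(i)\;-\;\tilde g_{k+m}^H\;-\;\sum_{i=1}^{k-1}\tilde J_{k+m}^H(i)\;>\;0.
\end{equation*}
The twin assertion for $(x,-1_m)$, handled through the ``up-type'' condition $u_{k+m}^{(N)}\leq -a_{k+m}^{(N)}$, will follow from the evident sign symmetry $Y_n^{(N,H)}(-x)=-Y_n^{(N,H)}(x)$.

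The analytic heart of the argument is showing $T_{k,m}\to +\infty$ as $m\to\infty$ with $k$ fixed. The first step is the clean identity $\int_0^t k_H(t,s)\,ds=K_H\,t^{H+1/2}$ for a positive constant $K_H$, which follows by Fubini and a Beta integral that converges precisely because $H\in(1/2,1)$. Telescoping in $n$ then gives
\begin{equation*}
\sum_{i=1}^{n-1}\tilde J_n^H(i)+\tilde g_n^H\;=\;K_H\bigl[n^{H+1/2}-(n-1)^{H+1/2}\bigr]\;\sim\;(H+\tfrac{1}{2})K_H\,n^{H-1/2},
\end{equation*}
which diverges as $n\to\infty$. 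Complementing this with two uniform kernel estimates---$(i)$ $\tilde g_n^H=O(1)$, by direct analysis of the integrable singularity $(n-v)^{H-1/2}$ of $k_H(n,v)$ near $v=n$, and $(ii)$ for each fixed $i$, $\tilde J_n^H(i)=O(n^{2H-2})\to 0$, obtained from the explicit formula $\partial_t k_H(t,v)=C_H\,v^{1/2-H}\,t^{H-1/2}\,(t-v)^{H-3/2}$ together with the mean value theorem---one rewrites $T_{k,m}$ (with $n=k+m$) as the divergent identity above minus $2\tilde g_n^H$ minus $2\sum_{i=1}^{k-1}\tilde J_n^H(i)$, and the desired divergence follows.

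For the ``in particular'' clause, it is enough to fix any $K\geq 2$ and choose $N\geq\max_{x\in\{-1,1\}^{K-1}}N_K(x)$: the $2^{K-1}$ points $(x,1_{n_K(x)})$ are then pairwise distinct arbitrage points (they differ in their first $K-1$ coordinates), so $|\As^{(N,H)}|\geq 2^{K-1}\to\infty$ as $K\to\infty$. The main technical obstacle I anticipate is the asymptotic step: one must control the competition between the growing ``present-future'' partial sum and the ``past $+$ boundary'' corrections in $T_{k,m}$, which is exactly where the closed-form identity for $\int_0^t k_H(t,s)\,ds$ combined with the uniform kernel estimates prove essential.
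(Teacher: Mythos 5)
Your proof is correct and follows the same overall skeleton as the paper's --- the scaling property of Proposition \ref{sp} to remove the $N$-dependence, reduction to the worst case (the paper takes $x=-1_{k-1}$; you equivalently bound $\sum_i \tilde J_{k+m}^H(i)\,x_i$ below by $-\sum_i \tilde J_{k+m}^H(i)$), and the observation that the drift is $O(N^{H-1})$ and hence negligible once a strict scaled inequality is in hand --- but the analytic heart is handled by a genuinely different device. The paper proves the divergence of the ``future'' sum $\sum_{i=k}^{k+m-1}j_{k+m}^H(i)$ by term-by-term estimates: Lemma \ref{ejg}(1) sandwiches $j_n^H(i)$ between multiples of $I_n(i)$, and explicit changes of variables yield the lower bound \eqref{ef2} of order $n^{H-1/2}$ together with the upper bound \eqref{ef1} of order $n^{2H-2}$ for the ``past'' sum. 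You instead compute the cumulative quantity exactly: the Fubini/Beta identity $\int_0^t k_H(t,s)\,ds=K_H\,t^{H+1/2}$ (which converges precisely because $H\in(1/2,1)$) plus telescoping gives $\sum_{i=1}^{n-1}\tilde J_n^H(i)+\tilde g_n^H=K_H\bigl[n^{H+1/2}-(n-1)^{H+1/2}\bigr]\sim (H+\tfrac12)K_H\,n^{H-1/2}$, after which only crude bounds are needed: $\tilde g_n^H=O(1)$, which is \eqref{eg1}, and $\tilde J_n^H(i)=O(n^{2H-2})$ for each fixed $i$, which indeed follows from $\partial_t k_H(t,v)=C_H v^{1/2-H}t^{H-1/2}(t-v)^{H-3/2}$ and is consistent with \eqref{ef1}. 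Your route gives a cleaner, more conceptual source for the $n^{H-1/2}$ growth (it is exactly the increment of $K_H t^{H+1/2}$), at the cost of one extra exact computation; the paper's route stays entirely within the estimates of Lemma \ref{ejg}, which it reuses in Sections \ref{Sec5}--\ref{Sec6}. Your treatment of the ``in particular'' clause (the $2^{K-1}$ extended points are pairwise distinct, so $|\As^{(N,H)}|\geq 2^{K-1}$ for $N$ large) is slightly more explicit than the paper's, and the sign symmetry you invoke for the $(x,-1_{n_k(x)})$ case is exactly how the paper disposes of it as well.
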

Before stating the next main result, we introduce the following notations. For $h:=\frac{H}2+\frac14\in(\frac12,\frac34)$, consider the random variable
\begin{equation}\label{dyh}
 \Ys_H:=2\,g_H\,\sum\limits_{k=1}^\infty \rho_{h}(k)\,\xi_k,
\end{equation}
where $g_H:=\frac{\sigma\, c_H}{H+\frac{1}{2}}$ and
\begin{equation}\label{drh}
 \rho_{h}(k):=\frac{1}{2}\left((k+1)^{2h}+{(k-1)}^{2h}-2{k}^{2h}\right).
\end{equation}
Therefore
\begin{equation*}
 \rho_{h}(k)\sim h(2h-1)\,k^{2h-2}=\frac{1}{8}(2H+1)(2H-1)\,k^{H-\frac{3}{2}}\quad\textrm{as}\quad k\rightarrow\infty.
\end{equation*}
Consequently, $\sum_{k=1}^\infty \rho_{h}(k)=\infty$ and, hence, the series in \eqref{dyh} is not absolutely convergent. However, since $\sum_{k=1}^\infty \rho_{h}^2(k)<\infty$, $\Ys_H$ is well defined in the sense of almost sure convergence (see p. 113 in \cite{Will} or \cite{Kac}). We remark that $\rho_h$ is the autocovariance function of a fractional Brownian motion with Hurst parameter $h$.

Our second main result characterizes the proportion of arbitrage points.
\begin{theorem}\label{proparbpt}
For any sequence $N_n\geq n$:
$$\lim\limits_{n\rightarrow\infty}\frac{|\As_n^{(N_n,H)}|}{2^{n-1}}=P(|\Ys_H|>g_H)>0.$$
In particular:
$$\lim\limits_{N\rightarrow\infty}\frac{|\As^{(N,H)}|}{2^{N}-1}=P(|\Ys_H|>g_H).$$
\end{theorem}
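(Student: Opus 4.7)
The plan is to first express the count as a probability. Since $(\xi_1,\ldots,\xi_{n-1})$ is uniform on $\{-1,1\}^{n-1}$, and $x\in\As_n^{(N,H)}$ is equivalent to $|Y_n^{(N,H)}(x)+a_n^{(N)}|\ge g_n^{(N,H)}$, one has
$$\frac{|\As_n^{(N,H)}|}{2^{n-1}}=P\!\left(\bigl|Y_n^{(N,H)}+a_n^{(N)}\bigr|\ge g_n^{(N,H)}\right).$$
Next, I would apply the scaling property announced for Section~\ref{Sec4}: by a change of variable in \eqref{d2j}--\eqref{d2g} exploiting the identity $k_H(\lambda t,\lambda s)=\lambda^{H-1/2}k_H(t,s)$ that follows from \eqref{dk}, one obtains $g_n^{(N,H)}=N^{-H}\tilde g_n^H$ and $J_n^{(N,H)}(i)=N^{-H}\tilde J_n^H(i)$, where $\tilde g_n^H$ and $\tilde J_n^H(i)$ depend only on $n,i,H$. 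The probability above then equals
$$P\!\left(\bigl|\tilde Y_n^H+N^H a_n^{(N)}\bigr|\ge\tilde g_n^H\right),\qquad\tilde Y_n^H:=\sum_{i=1}^{n-1}\tilde J_n^H(i)\,\xi_i.$$
The bound \eqref{ea} yields $|N_n^H a_n^{(N_n)}|\le N_n^{H-1}\|a\|_\infty\to 0$ since $H<1$, so the drift shift is harmless in the limit.

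It then suffices to show $\tilde g_n^H\to g_H$ and $\tilde Y_n^H\xrightarrow{d}\Ys_H$ as $n\to\infty$. The first assertion follows from a direct asymptotic expansion: $\tilde g_n^H=\sigma\int_{n-1}^{n}k_H(n,v)\,dv\to \sigma c_H/(H+1/2)=g_H$, obtained from the local behaviour of $k_H(n,n-r)$ near $r=0$. The second is more delicate. One first computes, for each fixed $k\ge1$, the pointwise limit $\tilde J_n^H(n-k)\to 2g_H\,\rho_h(k)$, with $h=H/2+1/4$; the key identity $H+1/2=2h$ is what converts the integrated second difference $(k+1)^{H+1/2}-2k^{H+1/2}+(k-1)^{H+1/2}$ arising from the computation into $2\rho_h(k)$. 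Exploiting the exchangeability of $(\xi_1,\ldots,\xi_{n-1})$, the sum $\tilde Y_n^H$ has the same law as $\sum_{k=1}^{n-1}\tilde J_n^H(n-k)\,\xi_k$. To this I would apply a truncation argument: split at some $K\ge1$ and use that the first block converges in distribution to $2g_H\sum_{k=1}^{K}\rho_h(k)\,\xi_k$, while the tail block is controlled by the $L^2$ estimate $\sum_{k>K}\bigl(\tilde J_n^H(n-k)\bigr)^2\to 0$ as $K\to\infty$, uniformly in $n$. Combined with $\sum_k\rho_h(k)^2<\infty$ and L\'evy's continuity theorem, this delivers $\tilde Y_n^H\xrightarrow{d}\Ys_H$.

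The level-$n$ claim then follows from the portmanteau theorem applied to the joint convergence $(\tilde Y_n^H+N_n^H a_n^{(N_n)},\tilde g_n^H)\xrightarrow{d}(\Ys_H,g_H)$ and the continuous functional $(y,g)\mapsto|y|-g$, once one checks that $\pm g_H$ are not atoms of $\Ys_H$. Non-atomicity follows from the Jessen--Wintner dichotomy since \eqref{dyh} is a series of independent symmetric Bernoullis with infinitely many nonzero coefficients $2g_H\rho_h(k)$. Strict positivity $P(|\Ys_H|>g_H)>0$ is then checked separately, e.g.\ by a direct tail-probability estimate. For the ``in particular'' statement, write
$$\frac{|\As^{(N,H)}|}{2^{N}-1}=\frac{1}{2^N-1}\sum_{n=1}^{N}2^{n-1}\cdot\frac{|\As_n^{(N,H)}|}{2^{n-1}},$$
and apply a Ces\`aro-type argument with the geometric weights $2^{n-1}$, which concentrate asymptotically on the top levels, using the levelwise convergence to $p:=P(|\Ys_H|>g_H)$.

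The main obstacle I anticipate is the uniform $L^2$-tail bound $\sum_{k>K}\bigl(\tilde J_n^H(n-k)\bigr)^2\to 0$, which reduces to a pointwise estimate of the form $|\tilde J_n^H(n-k)|\le C_H\,k^{H-3/2}$ valid for all $n$ and all $1\le k\le n-1$. This is dictated by the decay of the Molchan--Golosov kernel \eqref{dk} and is precisely the technical content announced for Section~\ref{Sec4} and the appendix; once it is secured, the rest of the argument is essentially standard.
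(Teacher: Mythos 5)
Your overall route coincides with the paper's: express the level-$n$ proportion as a probability, rescale by $N^H$ (Proposition~\ref{sp}), kill the drift via \eqref{ea}, prove $\sum_i j_n^H(i)\xi_i\xrightarrow{d}\Ys_H$ by combining a reversed-index block that converges in $L^2$ with a negligible remainder, use non-atomicity of $\Ys_H$ at $\pm g_H$ for the Portmanteau step, and finish the ``in particular'' part with a C\`esaro argument over the geometric weights $2^{n-1}$. All of these steps are sound in outline. Your Jessen--Wintner argument for non-atomicity is in fact lighter than the paper's (which proves absolute continuity with an $L^2$ density via superexponential decay of the characteristic function), and it suffices for the Portmanteau step.

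However, the estimate you single out as the ``main obstacle'' --- the uniform pointwise bound $|j_n^H(n-k)|\leq C_H\,k^{H-3/2}$ for \emph{all} $1\leq k\leq n-1$ --- is false, and it is exactly the point where the real work lies. From \eqref{ej1} one gets $j_n^H(n-k)\leq 2g_H\bigl(\tfrac{n}{n-k-1}\bigr)^{H-1/2}\rho_h(k)$, and the prefactor $\bigl(\tfrac{n}{n-k-1}\bigr)^{H-1/2}$ blows up as $k$ approaches $n-1$. Concretely, for fixed $i$ and $k=n-i$ one has $j_n^H(i)\asymp i^{\frac12-H}n^{2H-2}$, whereas your claimed bound would give $O(n^{H-3/2})$; since $2H-2-(H-\tfrac32)=H-\tfrac12>0$, the true coefficient exceeds the claimed bound by a factor $n^{H-1/2}\to\infty$. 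The decay $\rho_h(k)\asymp k^{H-3/2}$ controls $j_n^H(n-k)$ only for $k\lesssim(2-2H)n$, i.e.\ for indices $i\gtrsim i_n\sim(2H-1)n$. Your uniform $L^2$-tail bound $\sup_n\sum_{k>K}(j_n^H(n-k))^2\to0$ is nevertheless \emph{true}, but proving it forces you to treat the low-index block $\sum_{i<i_n}(j_n^H(i))^2$ separately and show it vanishes as $n\to\infty$; this is precisely the content of Lemma~\ref{mgn}, Corollary~\ref{iin} and Proposition~\ref{sum1} in the paper (the decomposition $\Ys_n^H=\bar\Ys_n^H+\widehat\Ys_n^H$ at the turning point $i_n$ of $I_n$), and it cannot be bypassed by a single power-law bound in $k$. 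A second, smaller gap: you defer the strict positivity $P(|\Ys_H|>g_H)>0$ to ``a direct tail-probability estimate,'' but Chebyshev-type or Paley--Zygmund bounds cannot work uniformly in $H$ (the variance $4g_H^2\sum_k\rho_h^2(k)$ is smaller than $g_H^2$ for $H$ near $\tfrac12$). A correct elementary argument is that $\sum_k\rho_h(k)=\infty$ with $\rho_h(k)>0$ and $\rho_h(k)\to0$ forces the support of $\Ys_H$ to be all of $\Rb$; the paper instead deduces it from the non-compactness of the support of the density $f_H$.
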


The next proposition provides the behaviour of the previous asymptotic proportion when $H$ is close to $1/2$ and when $H$ is close to $1$.

\begin{proposition}\label{proparbptH}
We have that:
$$\lim\limits_{H\rightarrow\frac{1}{2}}P(|\Ys_H|>g_H)=0
\quad\text{and}\quad\liminf\limits_{H\rightarrow 1}P(|\Ys_H|>g_H)\geq \frac{1}{3}.$$
\end{proposition}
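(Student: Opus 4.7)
The plan is to use moment estimates for $\Ys_H=2g_H\sum_k\rho_{h}(k)\xi_k$: Chebyshev's inequality for the first limit and the Paley--Zygmund inequality for the second. Both rely on the representation
$$\rho_{h}(k)=h(2h-1)\int_0^1\!\!\int_0^1(k-1+s+t)^{2h-2}\,ds\,dt,\quad k\geq 1,$$
obtained by writing $(k+1)^{2h}+(k-1)^{2h}-2k^{2h}$ as a double integral of the second derivative of $x\mapsto x^{2h}$ (well defined since $h>1/2$). For $k\geq 2$ this yields the uniform bound $0\leq\rho_{h}(k)\leq h(2h-1)(k-1)^{2h-2}$, while $\rho_h(1)=2^{2h-1}-1$.

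For the first limit ($H\to 1/2$, i.e.~$h\to 1/2$), Markov's inequality and the orthonormality of the $\xi_k$ in $L^2$ give
$$P(|\Ys_H|>g_H)\leq\frac{E[\Ys_H^2]}{g_H^2}=4\sum_{k=1}^\infty\rho_{h}(k)^2,$$
so it suffices to show that this series tends to $0$. Restricting to $h\in(1/2,5/8]$, the bound above yields $\rho_{h}(k)^2\leq(k-1)^{-3/2}$ for $k\geq 2$, a summable dominating function, while $\rho_{h}(1)\to 0$. Since $\rho_{h}(k)\to\rho_{1/2}(k)=0$ pointwise, dominated convergence finishes the first claim.

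For the second limit ($H\to 1$, i.e.~$h\to 3/4$), I would apply the Paley--Zygmund inequality to the nonnegative random variable $Z=\Ys_H^2$ with threshold $\theta_H:=g_H^2/E[\Ys_H^2]$, obtaining
$$P(|\Ys_H|>g_H)\geq(1-\theta_H)^2\,\frac{(E[\Ys_H^2])^2}{E[\Ys_H^4]}.$$
Two convergences remain. First, $\theta_H=(4\sum_k\rho_{h}(k)^2)^{-1}\to 0$: since $\rho_{3/4}(k)\sim\tfrac{3}{8}k^{-1/2}$ forces $\sum_k\rho_{3/4}(k)^2=\infty$, Fatou's lemma gives $\sum_k\rho_{h}(k)^2\to\infty$ as $h\to 3/4$. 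Second, independence of the $\xi_k$'s together with the elementary identity
$$E\Bigl[\Bigl(\textstyle\sum_k a_k\xi_k\Bigr)^{4}\Bigr]=3\Bigl(\textstyle\sum_k a_k^2\Bigr)^{2}-2\textstyle\sum_k a_k^4$$
yields
$$\frac{(E[\Ys_H^2])^2}{E[\Ys_H^4]}=\frac{1}{3-2\sum_k\rho_{h}(k)^4/(\sum_k\rho_{h}(k)^2)^2}.$$
Since $\rho_{h}(k)^4\leq[h(2h-1)]^4(k-1)^{8h-8}\leq C(k-1)^{-2}$ for $h\in(5/8,3/4]$, the sum $\sum_k\rho_{h}(k)^4$ stays bounded while the denominator $(\sum_k\rho_{h}(k)^2)^2$ diverges. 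The ratio therefore tends to $1/3$ and $(1-\theta_H)^2\to 1$, giving $\liminf_{H\to 1}P(|\Ys_H|>g_H)\geq 1/3$.

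The main technical obstacle is justifying the two interchanges of limit and infinite sum; both are handled uniformly through the polynomial bound on $\rho_{h}(k)$ obtained from the integral representation. Apart from that, the argument reduces to elementary second- and fourth-moment bookkeeping for a weighted Rademacher series.
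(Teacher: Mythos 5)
Your proof is correct and follows essentially the same route as the paper: Chebyshev's inequality combined with $\sum_k\rho_h^2(k)\to 0$ for the limit $H\to\tfrac12$, and the Paley--Zygmund inequality with a fourth-moment (Khintchine-type) bound combined with $\sum_k\rho_h^2(k)\to\infty$ for the limit $H\to 1$. The only differences are cosmetic: you control $\sum_k\rho_h^2(k)$ via the double-integral representation of $\rho_h(k)$ together with dominated convergence and Fatou, where the paper uses dedicated monotonicity and lower-bound lemmas for $\rho_h$, and you compute $E[\Ys_H^4]$ explicitly rather than citing Khintchine's inequality.
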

In order to state our last result concerning the asymptotic proportion of arbitrage paths, we recall the following notions. For any sequence of measurable sets $A_1, A_2,...$, we denote $\{A_n\, \textrm{i.o.}\}$ and $\{A_n\, \textrm{ult.}\}$, respectively the sets where $A_n$ happens infinitely often and where $A_n$ happens ultimately, by:
$$\{A_n\, \textrm{i.o.}\}:=\bigcap\limits_{n\geq 1}\bigcup\limits_{k\geq n}A_k\quad\textrm{and}\quad\{A_n \textrm{ult.}\}:=\bigcup\limits_{n\geq 1}\bigcap\limits_{k\geq n}A_k.$$ 
\begin{theorem}\label{asp} There exists $H_c\in(\frac{1}{2},1)$ such that for $H>H_c$
 $$P(|\Ys^H_n|>g_n^H\ \textrm{i.o.})=1$$
 and for $H<H_c$
 $$P(|\Ys^H_n|>g_n^H\ \textrm{ult.})=0.$$
 In particular, if $H>H_c$ then 
 $$\lim_{N\to\infty}\frac{\As_\Ps^{(N,H)}}{2^{N-1}}=1.$$
\end{theorem}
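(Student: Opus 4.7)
The proof plan centers on Kolmogorov's $0$-$1$ law combined with the quantitative information from Theorem \ref{proparbpt} and Proposition \ref{proparbptH}. First I would establish the tail-measurability of the two key events $\{|\Ys_n^H|>g_n^H\ \textrm{i.o.}\}$ and $\{|\Ys_n^H|>g_n^H\ \textrm{ult.}\}$ with respect to $(\xi_k)_{k\geq 1}$. The rescaled random walk $\Ys_n^H$ built in Section \ref{Sec5} has the form $\sum_i c_n^H(i)\,\xi_i$, and the kernel asymptotics underlying Theorem \ref{proparbpt} should give $c_n^H(i)\to 0$ as $n\to\infty$ for each fixed $i$. Together with $g_n^H\to g_H>0$, this would let one approximate $|\Ys_n^H|-g_n^H$ by a random variable measurable with respect to $\sigma(\xi_m,\xi_{m+1},\ldots)$, up to an error that vanishes with $n$, for every fixed $m$. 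The limsup/liminf events then lie in the tail $\sigma$-algebra, so their probabilities belong to $\{0,1\}$.

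Second, I would use Fatou's lemma for sets to decide which branch of the dichotomy is realized. From Theorem \ref{proparbpt},
\[
P\bigl(|\Ys_n^H|>g_n^H\ \textrm{i.o.}\bigr)\;\geq\;\limsup_n P(|\Ys_n^H|>g_n^H)\;=\;P(|\Ys_H|>g_H),
\]
and the analogous bound applied to the complementary events yields
\[
P\bigl(|\Ys_n^H|>g_n^H\ \textrm{ult.}\bigr)\;\leq\;P(|\Ys_H|>g_H).
\]
By Proposition \ref{proparbptH}, the right-hand side is at least $1/3$ for $H$ close to $1$ and tends to $0$ as $H\downarrow 1/2$. The first inequality, combined with the $0$-$1$ dichotomy above, forces the i.o.\ probability to equal $1$ in the former regime; the second forces the ult.\ probability to equal $0$ in the latter. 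Setting
\[
H_c:=\inf\{H\in(1/2,1): P(|\Ys_n^H|>g_n^H\ \textrm{i.o.})=1\},
\]
the two regimes just identified give $H_c\in(1/2,1)$, and a monotonicity observation on $H\mapsto P(|\Ys_H|>g_H)$ (or a comparison at the level of the kernels in the representation of $\Ys_n^H$) would provide the clean separation claimed around $H_c$ for both statements of the theorem.

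For the ``in particular'' consequence, a Fubini identification writes $|\As_\Ps^{(N,H)}|/2^{N-1}$ as the probability that a uniformly chosen path in the $N$-binary tree crosses at least one arbitrage point; the i.o.\ statement above implies that a typical path in the infinite tree crosses infinitely many arbitrage points when $H>H_c$, and in particular crosses at least one before level $N$ eventually, giving the limit $1$. The main technical obstacle is the first step: proving the tail-measurability requires a decay estimate $c_n^H(i)\to 0$ with enough uniformity in the underlying level $N\geq n$, which must be extracted from the kernel asymptotics of Section \ref{Sec5}. Once this is in place, the rest of the argument is a short assembly of Kolmogorov's law and Fatou's inequality applied to results already in hand.
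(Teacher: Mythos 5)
Your plan has a genuine gap at its first and most important step: the events $\{|\Ys^H_n|>g_n^H\ \textrm{i.o.}\}$ and $\{|\Ys^H_n|>g_n^H\ \textrm{ult.}\}$ are \emph{not} shown to be tail events by the argument you sketch, and I do not believe they can be. Knowing that the coefficients of $\xi_1,\dots,\xi_m$ in $\Ys^H_n$ tend to $0$ only tells you that modifying the first $m$ coordinates perturbs $\Ys^H_n$ by a deterministic error $\epsilon_n^{(m)}\to0$; this sandwiches the event $\{|\Ys^H_n|>g_n^H\}$ between the two events $\{|V^{(m)}_n|>g_n^H\pm\epsilon_n^{(m)}\}$ (with $V^{(m)}_n$ the part supported on $\xi_i$, $i>m$), but it does not identify it with either, so Kolmogorov's law applies only to the outer, margined events. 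A telling symptom that something is wrong: if your measurability claim were correct, then combining reverse Fatou with $0<P(|\Ys_H|>g_H)<1$ (which holds for \emph{every} $H$, since the density of $\Ys_H$ is supported on all of $\Rb$) would give $P(\textrm{i.o.})=1$ and $P(\textrm{ult.})=0$ for \emph{all} $H\in(\tfrac12,1)$, making the threshold $H_c$ vacuous. The paper avoids this by splitting $\Ys^H_n=\bar{\Ys}^H_n+\widehat{\Ys}^H_n$ at the index $i_n\to\infty$ (Corollary \ref{iin}): $\widehat{\Ys}^H_n$ is \emph{exactly} measurable with respect to $\sigma(\xi_i,\,i\geq i_n)$, so the i.o./ult.\ events for $\widehat{\Ys}^H_{n_k}$ genuinely lie in the tail $\sigma$-field (Lemma \ref{01l}), and the transfer back to $\Ys^H_n$ costs a multiplicative margin $(1\pm\delta)$ on the threshold $g_n^H$ together with the a.s.\ convergence of $\bar{\Ys}^H_{n_k}$ to $0$ along a subsequence.

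Those margins are precisely where the threshold $H_c$ comes from, which is the second gap in your plan. The paper does not decide the dichotomy with $P(|\Ys_H|>g_H)$ and an (unproven, and nowhere available) monotonicity of $H\mapsto P(|\Ys_H|>g_H)$; it uses a variance criterion. With the margin $1-\ep$, Chebyshev gives $P(|\widehat{\Ys}^H_{n_k}|>g_{n_k}^H(1-\ep))\leq \frac{4}{(1-\ep)^2}\sum_k\rho_h^2(k)$, which is $<1$ exactly when $\sum_k\rho_h^2(k)<\tfrac14$; with the margin $1+\delta$, Paley--Zygmund and Khintchine give a strictly positive lower bound exactly when $\sum_k\rho_h^2(k)>\tfrac14$. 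The clean separation then follows from the monotonicity in $h$ of $\sum_k\rho_h^2(k)$ (Lemma \ref{mono}, Corollary \ref{hc}), yielding $H_c=2h_c-\tfrac12$ where $h_c$ is the unique crossing of the level $\tfrac14$, i.e.\ where $\Var(\Ys_H)=g_H^2$. Your Fatou inequalities and the treatment of the ``in particular'' statement via \eqref{ap} are fine as far as they go, but without the exact-support splitting and the margined variance criterion the core of the theorem is not reached.
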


\begin{remark}
 The previous theorem tells us that, when $H>H_c$, with probability $1$ a path in the binary tree asymptotically crosses an infinite number of arbitrage points. On the other hand, when $H<H_c$, with probability $1$ a path in the binary tree asymptotically crosses an infinite number of non-arbitrage points. 
\end{remark}

\section{A discrete scaling property and the proof of Proposition~\ref{arbpt}}\label{Sec4}
In the next proposition, we prove that the dependence on $N$ of the coefficients in the definition of $X_n^{(N,H)}$ appears as a multiplicative scaling factor.
\begin{proposition}\label{sp}
The following statements hold:\\
$(1)$ For all $1\leq i< n\leq N$, we have:
 $$J_n^{(N,H)}(i)=\frac{1}{N^H}\, j_n^H(i),$$
 where
 \begin{equation}\label{fj}
 j_n^H(i):=\sigma\, C_H \int\limits_{i-1}^{i}x^{\frac{1}{2}-H}\left(\int\limits_0^1 (v+n-1)^{H-\frac{1}{2}} (v+n-1-x)^{H-\frac{3}{2}}dv\right) dx,
\end{equation}
$(2)$ For all $1\leq n\leq N$, we have:
 $$g_n^{(N,H)}=\frac{1}{N^H} \, g_n^H,$$
where
\begin{equation}\label{fg}
 g_n^H:=\sigma\, C_H\int\limits_{n-1}^{n}x^{\frac{1}{2}-H}(n-x)^{H-\frac{1}{2}}\left(\int\limits_0^1 (y(n-x)+x)^{H-\frac{1}{2}}y^{H-\frac{3}{2}}dy\right)dx.
\end{equation}
In particular, for all $1\leq n\leq N$, we have that:
\begin{equation*}
N^H\, X_n^{(N,H)}= n^H\,X_n^{(n,H)}.
\end{equation*}

\end{proposition}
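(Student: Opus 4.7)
The approach is a direct computation: plug the definition of the kernel $k_H$ into the formulas \eqref{d2j} and \eqref{d2g} and then rescale the integration variables by a factor of $N$. The whole point is to push every occurrence of $N$ into an overall prefactor of $N^{-H}$.

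For part $(1)$, I would first observe that
$$
k_H\!\left(\tfrac{n}{N},u\right)-k_H\!\left(\tfrac{n-1}{N},u\right)=C_H\,u^{\frac{1}{2}-H}\!\int_{(n-1)/N}^{n/N}\!v^{H-\frac{1}{2}}(v-u)^{H-\frac{3}{2}}dv,
$$
so that
$$
J_n^{(N,H)}(i)=\sigma\sqrt{N}\,C_H\!\int_{(i-1)/N}^{i/N}u^{\frac12-H}\!\int_{(n-1)/N}^{n/N}v^{H-\frac12}(v-u)^{H-\frac32}\,dv\,du.
$$
Then I would make the change of variables $u=x/N$ and $v=y/N$. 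The three factors $u^{\frac12-H}$, $v^{H-\frac12}$ and $(v-u)^{H-\frac32}$ each produce a power of $N$, namely $N^{H-\frac12}$, $N^{\frac12-H}$ and $N^{\frac32-H}$; together with the Jacobian $N^{-2}$ and the outside $\sqrt{N}$, the total exponent of $N$ is $\tfrac12+(H-\tfrac12)+(\tfrac12-H)+(\tfrac32-H)-2=-H$. A final inner substitution $y=v+n-1$ yields exactly $j_n^H(i)$ as defined in \eqref{fj}.

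Part $(2)$ follows the same recipe. Inserting the definition of $k_H$ into \eqref{d2g} gives
$$
g_n^{(N,H)}=\sigma\sqrt N\,C_H\!\int_{(n-1)/N}^{n/N}u^{\frac12-H}\!\int_u^{n/N}v^{H-\frac12}(v-u)^{H-\frac32}\,dv\,du,
$$
and the substitution $u=x/N$, $v=y/N$ produces, by exactly the same bookkeeping as above, a prefactor of $N^{-H}$ in front of
$$
\sigma\,C_H\int_{n-1}^n x^{\frac12-H}\!\int_x^n y^{H-\frac12}(y-x)^{H-\frac32}\,dy\,dx.
$$
To match the form \eqref{fg} I would rescale the inner integral by $y=z(n-x)+x$ so that $z$ ranges over $[0,1]$; this pulls out the factor $(n-x)^{H-\frac12}$ and leaves the integrand $(z(n-x)+x)^{H-\frac12}z^{H-\frac32}$, which is precisely $g_n^H$.

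The final identity is then purely algebraic. Combining $(1)$ and $(2)$ with the representation \eqref{proX} gives
$$
N^H X_n^{(N,H)}=\sum_{i=1}^{n-1}j_n^H(i)\,\xi_i+g_n^H\,\xi_n,
$$
which no longer depends on $N$; evaluating the same formula at $N=n$ yields $n^H X_n^{(n,H)}$, and equality follows. There is no real obstacle here: the only delicate step is the careful tracking of the five separate powers of $N$ coming from the two change-of-variable substitutions, and checking that they combine to exactly $-H$.
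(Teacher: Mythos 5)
Your proposal is correct and follows essentially the same route as the paper: expand $k_H$, rescale the integration variables by $N$ to extract the overall factor $N^{-H}$, and then normalize the inner integral to $[0,1]$ (the paper merely performs the substitutions in a slightly different order). The exponent bookkeeping and the final algebraic identity $N^H X_n^{(N,H)}=n^H X_n^{(n,H)}$ are both verified correctly.
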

\begin{proof}
(1) From equation \eqref{dk}, we have that
$$k_H\left(\frac{n}{N},u\right)-k_H\left(\frac{n-1}{N},u\right)=C_H \,u^{\frac{1}{2}-H}\int\limits_{\frac{n-1}{N}}^{\frac{n}{N}}s^{H-\frac{1}{2}}(s-u)^{H-\frac{3}{2}}ds.$$
By means of the change of variable $v=Ns-n+1$, the last identity implies that:
$$k_H\left(\frac{n}{N},u\right)-k_H\left(\frac{n-1}{N},u\right)=\frac{C_H \,u^{\frac{1}{2}-H}}{N^{2H-1}}\int\limits_{0}^{1}(v+n-1)^{H-\frac{1}{2}}(v+n-1-Nu)^{H-\frac{3}{2}}dv.$$
The result follows by plugging this expression in the definition of $J_n^{(N,H)}(i)$, see \eqref{d2j}, and making the change of variable $x=Nu$.

(2) Using the definition of $g_n^{(N,H)}$ in \eqref{d2g} and the change of variable $x=Nu$, it follows
  \begin{equation*}
  g_n^{(N,H)}=\frac{\sigma}{\sqrt{N}}\int\limits_{n-1}^{n}k_H\left(\frac{n}{N},\frac{x}{N}\right)dx.
 \end{equation*}
On the other hand, we have that:
\begin{equation}\label{d2k}
 k_H\left(\frac{n}{N},\frac{x}{N}\right)=C_H N^{H-\frac{1}{2}} x^{\frac{1}{2}-H}\int\limits_{\frac{x}{N}}^{\frac{n}{N}}s^{H-\frac{1}{2}}\left(s-\frac{x}{N}\right)^{H-\frac{3}{2}}ds.
\end{equation}
By means of the change of variable $Ns=y(n-x)+x$, the integral in the previous identity can be expressed in the following form:
\begin{equation*}
 \int\limits_{\frac{x}{N}}^{\frac{n}{N}}s^{H-\frac{1}{2}}\left(s-\frac{x}{N}\right)^{H-\frac{3}{2}}ds=\frac{1}{N^{2H-1}}(n-x)^{H-\frac{1}{2}}\int\limits_0^1 (y(n-x)+x)^{H-\frac{1}{2}}y^{H-\frac{3}{2}}dy.
 \end{equation*}
Plugging the last expression in \eqref{d2k}, and using the resulting identity in \eqref{d2g}, we obtain the desired result.
\end{proof}
Inspired by the previous proposition, we define the random variables $\Ys^H_n$ as:
$$\Ys^H_n:=\sum\limits_{i=1}^{n-1}j_n^H(i)\,\xi_i.$$
From Proposition \ref{sp}, for all $1\leq n\leq N$, the following identities hold:
\begin{equation}\label{spcys}
 Y_n^{(N,H)}=\frac{1}{N^H}\,\Ys^H_n,\qquad X_n^{(N,H)}=\,\frac{1}{N^H}(\Ys^H_n+g_n^H\,\xi_n).
\end{equation}
The proof of Proposition \ref{arbpt} requires upper and lower bounds for the quantities  $J_n^{(N,H)}(i)$ and $g_n^{(N,H)}$. Thanks to the above-mentioned scaling property, it is enough to bound the parameters $j_n^H(i)$ and $g_n^H$, which is done in the next Lemma.
\begin{lemma}\label{ejg}The following inequalities hold:\\
$(1)$ For all $1\leq i\leq n-1< N$, we have:
\begin{equation}\label{ej1}
 \sigma\, c_H \,(n-1)^{H-\frac{1}{2}}\,I_n(i)\leq j_n^H(i)\leq \sigma\, c_H\, n^{H-\frac{1}{2}}\,I_n(i),
\end{equation}
where
$$I_n(i):=\int\limits_{i-1}^i x^{\frac{1}{2}-H}\phi_n^H(x)dx\quad \textrm{and}\quad \phi_n^H(x):=(n-x)^{H-\frac{1}{2}} -(n-1-x)^{H-\frac{1}{2}}.$$
$(2)$  For all $1< n\leq N$, we have:
\begin{equation}\label{eg1}
g_H\,\leq g_n^H\leq g_H \,\left(1+\frac{1}{n-1}\right)^{H-\frac{1}{2}}.
\end{equation}
 \end{lemma}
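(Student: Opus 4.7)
The plan is to reduce both inequalities to elementary manipulations by exposing an antiderivative structure in the inner integral. In both parts the integrand contains a factor of the form $r^{H-\frac12}(r-x)^{H-\frac32}$ where, after an appropriate linear substitution, the $r$-variable ranges over an interval on which $r^{H-\frac12}$ is nearly constant; pulling it out at its extreme values and integrating the elementary antiderivative of $(r-x)^{H-\frac32}$ yields the stated bounds.

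For part (1), I first perform the substitution $s = v + n - 1$ in the inner integral of \eqref{fj} to obtain
$$j_n^H(i) \;=\; \sigma\, C_H \int_{i-1}^{i} x^{\frac12 - H}\int_{n-1}^{n} s^{H-\frac12}(s-x)^{H-\frac32}\, ds\, dx.$$
On $[n-1, n]$ the factor $s^{H-\frac12}$ lies between $(n-1)^{H-\frac12}$ and $n^{H-\frac12}$. Pulling the appropriate constant out and observing that
$$\int_{n-1}^{n} (s-x)^{H-\frac32}\, ds \;=\; \frac{(n-x)^{H-\frac12} - (n-1-x)^{H-\frac12}}{H-\frac12} \;=\; \frac{\phi_n^H(x)}{H-\frac12},$$
the identity $C_H = c_H\,(H-\frac12)$ cancels the denominator and directly produces the claimed bounds involving $I_n(i)$.

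For part (2), I first transform \eqref{fg} via the substitution $r = y(n-x) + x$ (reversing the change of variables used to arrive at \eqref{fg}) to obtain
$$g_n^H \;=\; \sigma\, C_H \int_{n-1}^{n} x^{\frac12 - H}\int_{x}^{n} r^{H-\frac12}(r-x)^{H-\frac32}\, dr\, dx.$$
For the upper bound I bound $r^{H-\frac12} \leq n^{H-\frac12}$ and then, after computing $\int_x^n (r-x)^{H-\frac32}\, dr = (n-x)^{H-\frac12}/(H-\frac12)$, use $x^{\frac12-H} \leq (n-1)^{\frac12 - H}$ on $[n-1,n]$ together with $\int_{n-1}^n (n-x)^{H-\frac12}\, dx = 1/(H+\frac12)$; this yields $g_H\,(1 + 1/(n-1))^{H-\frac12}$. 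For the lower bound the key is to use the sharper pointwise estimate $r \geq x$, i.e.\ $r^{H-\frac12} \geq x^{H-\frac12}$, so that the outer factor $x^{\frac12 - H}$ cancels exactly with $x^{H - \frac12}$ and the remaining integral reduces to $\sigma c_H \int_{n-1}^n (n-x)^{H-\frac12}\, dx = g_H$.

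The only subtle point I anticipate is in the lower bound of part (2): a naïve estimate $r^{H-\frac12} \geq (n-1)^{H-\frac12}$, parallel to the upper bound, would introduce a spurious factor $((n-1)/n)^{H-\frac12} < 1$ and miss the target constant. Recognizing that the pointwise comparison $r \geq x$ produces the cancellation $x^{\frac12 - H}\cdot x^{H - \frac12} = 1$ and hence the sharp lower bound $g_H$ is the key observation that unlocks the proof.
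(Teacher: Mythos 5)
Your proof is correct and follows essentially the same route as the paper: in both parts you bound the nearly-constant factor $s^{H-\frac12}$ (resp.\ $r^{H-\frac12}$) by its extreme values on the integration interval, integrate the remaining power explicitly so that $C_H=c_H(H-\tfrac12)$ cancels the denominator, and in part (2) use the sharp pointwise bound $r\geq x$ to cancel $x^{\frac12-H}$ and obtain the exact lower bound $g_H$ — precisely the estimates $(n-1)^{H-\frac12}\leq(v+n-1)^{H-\frac12}\leq n^{H-\frac12}$ and $x^{H-\frac12}\leq(y(n-x)+x)^{H-\frac12}\leq n^{H-\frac12}$ used in the paper, merely rewritten after undoing the changes of variables. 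No gaps.
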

 \begin{proof}
(1) Since, for every $v\in[0,1]$, we have $n-1\leq v+n-1\leq n$, we deduce that:
 \begin{align*}
(n-1)^{H-\frac{1}{2}} \frac{\phi_n^H(x) }{H-\frac{1}{2}}&\leq \int\limits_0^1 (v+n-1)^{H-\frac{1}{2}} (v+n-1-x)^{H-\frac{3}{2}}dv \leq  n^{H-\frac{1}{2}}\frac{\phi_n^H(x) }{H-\frac{1}{2}}.
 \end{align*}
 The result is  obtained by plugging the previous inequalities in \eqref{fj}.\\
 (2) Note first that for every $x\in(n-1,n)$ we have:
 $$\frac{x^{H-\frac{1}{2}}}{H-\frac{1}{2}}\leq\int\limits_0^1 (y(n-x)+x)^{H-\frac{1}{2}}y^{H-\frac{3}{2}}dy\leq \frac{n^{H-\frac{1}{2}}}{H-\frac{1}{2}}$$
 Using these inequalities and \eqref{fg}, we obtain the following sequence of inequalities:
 \begin{align*}
  g_H \,\leq g_n^H &\leq \sigma\, c_H n^{H-\frac{1}{2}}\int\limits_{n-1}^{n}x^{\frac{1}{2}-H}(n-x)^{H-\frac{1}{2}}dx \leq  g_H\, n^{H-\frac{1}{2}} (n-1)^{\frac{1}{2}-H},
 \end{align*}
which proves the desired inequalities.
 \end{proof}
 Now, we have all the ingredients for the proof of our first main result.
\begin{proof}[Proof of Proposition~\ref{arbpt}]
 Fix $k\geq 2$. We prove only the first statement. The second one follows analogously. Note that, it is enough to show the result for $x=-1_{k-1}$. More precisely, we prove that, for $n$ sufficiently large, $d_{n+k}^{N}(-1_{k-1},1_n)\geq -a_{k+n}^{(N)}$, which is equivalent to: 
\begin{equation*}
 R_n^N(k):=a_{k+n}^{(N)}+Y_{k+n}^{(N,H)}(-1_{k-1},1_n)-g_{n+k}^{(N,H)}\geq 0.
\end{equation*}
The first term is bounded as in \eqref{ea}. For the last term, we use Proposition \ref{sp} and equation \eqref{eg1} to obtain
\begin{equation}\label{eg}
g_{k+n}^{(N,H)}\leq \frac{c_g}{N^H},
\end{equation}
where $c_g$ is a positive constant.
It remains to obtain a lower bound for the term $Y_{k+n}^{(N,H)}(-1_{k-1},1_n)$. Note first that:
$$Y_{k+n}^{(N,H)}(-1_{k-1},1_n)=\frac{1}{N^H}\left(-\sum\limits_{i=1}^{k-1} j_{k+n}^H(i)+\sum\limits_{i=k}^{k+n-1} j_{k+n}^H(i)\right).$$

Using the upper bound in \eqref{ej1} for $j_{k+n}(i)$, we obtain:
\begin{equation*}
 \sum\limits_{i=1}^{k-1} j_{k+n}^H(i)\leq\sigma\, c_H\, (n+k)^{\alpha}\int\limits_0^{k-1}x^{-\alpha}\phi_{n+k}^H(x)\,dx,
 \end{equation*}
 where $\alpha=H-\frac12\in(0,\frac12)$.
Using the definition of the function $\phi_{n+k}^H$ and some appropriate change of variables, we obtain:

\begin{align*}
\int\limits_0^{k-1}x^{-\alpha}\phi_{n+k}^H(x)\,dx&=(n+k)\int\limits_0^{\frac{k-1}{n+k}}(1-v)^{\alpha} v^{-\alpha}dv-(n+k-1)\int\limits_0^{\frac{k-1}{n+k-1}}(1-v)^{\alpha} v^{-\alpha}dv\\
 &\leq \int\limits_0^{\frac{k-1}{n+k-1}}(1-v)^{\alpha} v^{-\alpha}dv \leq \frac{1}{1-\alpha}\left(\frac{k-1}{n+k-1}\right)^{1-\alpha}.
\end{align*}
Thus, for $n\geq k$:
\begin{equation}\label{ef1}
  \sum\limits_{i=1}^{k-1} j_{k+n}^H(i)\leq \frac{2\,\sigma\,c_H}{1-\alpha}\,n^{\alpha}\left(\frac{k-1}{n+k-1}\right)^{1-\alpha}.
\end{equation}
Now, using the lower bound in \eqref{ej1} for $j_{k+n}(i)$, we have:
\begin{align*}
\sum\limits_{i=k}^{k+n-1} j_{k+n}^H(i)&\geq \sigma\, c_H\, (n+k-1)^{\alpha}\int\limits_{k-1}^{n+k-1}x^{-\alpha}\phi_{n+k}^H(x)\,dx
\end{align*}
Proceeding as before, using an appropriate change of variables, we deduce that:
\begin{align*}
\int\limits_{k-1}^{n+k-1}\!\!x^{-\alpha}\phi_{n+k}^H(x)\,dx&=(n+k)\!\!\!\!\int\limits_{\frac{k-1}{n+k}}^{\frac{n+k-1}{n+k}}\!\!\!\!(1-v)^{\alpha} v^{-\alpha} dv-(n+k-1)\!\!\!\!\int\limits_{\frac{k-1}{n+k-1}}^{1}\!\!\!\!(1-v)^{\alpha} v^{-\alpha} dv\\
&\geq\int\limits_{\frac{k-1}{n+k-1}}^{1}(1-v)^{\alpha} v^{-\alpha}dv-(n+k)\int\limits_{1-\frac{1}{n+k}}^1(1-v)^{\alpha} v^{-\alpha} dv\\
 &\geq \frac{1}{1+\alpha}\left(\left(\frac{n}{n+k-1}\right)^{1+\alpha}-\frac{1}{(n+k-1)^{\alpha}}\right).
\end{align*}
and then, for $n\geq k$ big enough:
\begin{equation}\label{ef2}
 \sum\limits_{i=k}^{k+n-1} j_{n+k}^H(i)\geq \frac{\sigma\,c_H}{4(1+\alpha)}n^{\alpha} .
\end{equation}
Now, using \eqref{ea}, \eqref{eg}, \eqref{ef1} and \eqref{ef2}, we obtain for $n$ big enough:
\begin{align*}
N^H R_n^N(k)\geq\sigma\, c_H\, n^{\alpha}\left(\frac{1}{4(1+\alpha)}-\frac{2}{1-\alpha}\left(\frac{k-1}{n+k-1}\right)^{1-\alpha}\right)-c_g-\frac{||a||_{\infty}}{N^{1-H}}.
\end{align*}
As a consequence, for $n$ and $N$ large enough, $R_n^N(k)\geq 0$, which proves the result.
\end{proof}

\section{On the proportion of arbitrage points and the proofs of Theorem~\ref{proparbpt} and Proposition~\ref{proparbptH}}\label{Sec5}
In this section we identify the asymptotic behaviour of the proportion of arbitrage points with the convergence of a well-chosen sequence of random variables.

From the definition of the set $\As_n^{(N,H)}$, we have:
\begin{align*}
 (x_1,...,x_{n-1})\in\As_n^{(N,H)}&\Leftrightarrow Y_{n}^{(N,H)}(x_1,...,x_{n-1})\notin \left(-g_n^{(N,H)}-a_n^{(N)},g_n^{(N,H)}-a_n^{(N)}\right)\\
 &\Leftrightarrow |Y_{n}^{(N,H)}(x_1,...,x_{n-1})+a_n^{(N)}|\geq g_n^{(N,H)}.
\end{align*}
Since the paths in $\{-1,1\}^{n-1}$ are equidistributed, we have that:
\begin{equation}\label{eap}
\frac{|\As_n^{(N,H)}|}{2^{n-1}}=P\left(|Y_n^{(N,H)}+a_n^{(N)}|\geq g_n^{(N,H)}\right). 
\end{equation}
In a similar way, we can see that 
\begin{equation}\label{ap}
\frac{|\As_\Ps^{(N,H)}|}{2^{N-1}}=P\left(\exists\ n\in\{1,\ldots,N\}:\ |Y_n^{(N,H)}+a_n^{(N)}|\geq g_n^{(N,H)}\right). 
\end{equation}

Thanks to \eqref{eap} and \eqref{spcys}, the proportion of arbitrage points at asymptotic levels is related to the limit behaviour of the random variables $(\Ys^H_n)_{n\geq 1}$. More precisely, for any strictly increasing sequence of positive integers $N_n$, we have the following relation:
\begin{equation}\label{aps}
\frac{|\As_{n}^{(N_n,H)}|}{2^{n-1}}=P\left(|\Ys^H_n+a_n^{(N_n)} N_n^H|\geq g_n^H\right).                                                                                                                                                                                                                                                                                                                                                                                                                                                                                                                                                                                                                                                       \end{equation}
For each $n\geq1$, $\Ys^H_n$ is a sum of independent random variables and its variance is given by
$$Var(\Ys^H_n)=\sum\limits_{i=1}^{n-1}\left(j_n^H(i)\right)^2.$$
However, we cannot apply a CLT in order to study the limit behaviour $\Ys^H_n$. The reason is that, by inequality \eqref{ej1} and the definition of $g_H$:
\begin{equation*}
\lim\limits_{n\rightarrow\infty}j_n^H(n-1)=g_H\left(2^{H+\frac{1}{2}}-2\right)>0,
\end{equation*}
which implies that the Lindeberg condition is not satisfied.
Instead, we express the random variable $\Ys^H_n$ as a sum of two independent random variables $\bar{\Ys}_n^{H}$ and $\widehat{\Ys}^{H}_n$ with very different properties. We do so following the monotonicity properties of the function $I_n$ defined in Lemma \ref{ejg}. Indeed, by Corollary \ref{iin}, there is $i_n\to\infty$ such that $I_n$ is decreasing on $\{1,..,i_n-1\}$ and increasing on $\{i_n+1,...,n-1\}$. This allows to write $\Ys^H_n$ as $\bar{\Ys}_n^{H}+\widehat{\Ys}^{H}_n$, where:
$$\bar{\Ys}_n^{H}=\sum\limits_{i=1}^{i_n-1}j_n^H(i)\,\xi_i\quad \textrm{and}\quad \widehat{\Ys}^{H}_n=\sum\limits_{i=i_n}^{n-1}j_n^H(i)\,\xi_i.$$ 
These random variables are clearly independent and symmetric. In the next sections, their convergence properties are studied.

\subsection{\texorpdfstring{On the random variables $\bar{\Ys}_n^{H}$}{}}\label{bar}
The next result gives the convergence of the first part of the random walk.
\begin{proposition}\label{sum1}
We have that
$\bar{\Ys}_n^{H}\xrightarrow[n\rightarrow\infty]{L^2}0.$
\end{proposition}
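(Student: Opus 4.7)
Since $\xi_1,\xi_2,\ldots$ are i.i.d., centred with unit variance, the orthogonality of the summands gives
\[
\E\!\left[(\bar{\Ys}_n^{H})^{2}\right]=\sum_{i=1}^{i_n-1}\bigl(j_n^H(i)\bigr)^{2},
\]
so the whole task reduces to showing that this partial sum of squared weights tends to $0$. I will bound each $j_n^H(i)$ using Lemma~\ref{ejg}(1), which gives $j_n^H(i)\le \sigma c_H\, n^{H-1/2}\,I_n(i)$, and therefore
\[
\E\!\left[(\bar{\Ys}_n^{H})^{2}\right]\le \sigma^{2}c_H^{2}\,n^{2H-1}\sum_{i=1}^{i_n-1} I_n(i)^{2}.
\]

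The first key step is Cauchy--Schwarz applied on the interval $[i-1,i]$ with $f=1$ and $g(x)=x^{1/2-H}\phi_n^H(x)$:
\[
I_n(i)^{2}\le \int_{i-1}^{i} x^{1-2H}\,\phi_n^H(x)^{2}\,dx,
\]
so that the sum telescopes into a single integral
\[
\sum_{i=1}^{i_n-1} I_n(i)^{2}\le \int_{0}^{i_n-1} x^{1-2H}\,\phi_n^H(x)^{2}\,dx.
\]
The second key step is a mean-value bound for $\phi_n^H(x)=(n-x)^{H-1/2}-(n-1-x)^{H-1/2}$: applying the MVT to $t\mapsto t^{H-1/2}$ and using that the derivative $(H-1/2)t^{H-3/2}$ is \emph{decreasing} (because $H-3/2<0$) yields
\[
\phi_n^H(x)\le \bigl(H-\tfrac12\bigr)(n-1-x)^{H-3/2},\qquad 0\le x\le n-2.
\]
Substituting and rescaling $u=x/(n-1)$ turns the upper bound into a Beta-type integral:
\[
\int_{0}^{i_n-1}\!\! x^{1-2H}(n-1-x)^{2H-3}\,dx=(n-1)^{-1}\!\!\int_{0}^{(i_n-1)/(n-1)}\!\! u^{1-2H}(1-u)^{2H-3}\,du.
\]

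To close the argument I need the last integral to stay bounded in $n$, which requires $(i_n-1)/(n-1)$ to remain bounded away from $1$ (the integrand is only singular at $u=1$; integrability at $u=0$ is automatic since $1-2H>-1$). This qualitative control on the minimiser $i_n$ is exactly the content of Corollary~\ref{iin} (analysis of the variations of $I_n$) and is the main technical input I would invoke; under it one has $\int_0^{(i_n-1)/(n-1)} u^{1-2H}(1-u)^{2H-3}du\le C$ uniformly in $n$. Collecting the estimates,
\[
\E\!\left[(\bar{\Ys}_n^{H})^{2}\right]\le C\, n^{2H-1}\cdot (n-1)^{-1}=O\!\left(n^{2H-2}\right)\xrightarrow[n\to\infty]{}0,
\]
since $H<1$, which gives the $L^{2}$ convergence to $0$. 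The main obstacle is purely the uniform control of the Beta integral near $u=1$, i.e.\ ensuring that $i_n$ is not asymptotically pushed up to $n$; everything else is a direct consequence of Lemma~\ref{ejg}(1), Cauchy--Schwarz, and a single mean-value estimate.
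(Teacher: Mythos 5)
Your proof is correct, and while it shares the paper's skeleton --- writing $\Var(\bar{\Ys}_n^{H})=\sum_{i=1}^{i_n-1}(j_n^H(i))^2$, invoking the upper bound of Lemma~\ref{ejg}(1), and using the location of $i_n$ --- the middle of the argument is genuinely different. (One small citation point: the control you need on $i_n$ comes from Lemma~\ref{mgn}, which gives $x_n/(n-1)\to H-\tfrac12$ and hence $(i_n-1)/(n-1)$ bounded away from $1$; Corollary~\ref{iin} only records the resulting monotonicity of $I_n$.) The paper bounds each $I_n(i)$, $i\geq2$, pointwise by $(i-1)^{\frac12-H}\phi_n^H(i_n-1)$ using the monotonicity of the two factors of the integrand, treats $i=1$ separately, and concludes with Kronecker's lemma applied to $\sum i^{-(3-2H)}$. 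You instead apply Cauchy--Schwarz on each unit interval to replace $\sum_i I_n(i)^2$ by $\int_0^{i_n-1}x^{1-2H}\phi_n^H(x)^2\,dx$, control $\phi_n^H$ by a mean-value estimate, and rescale to a Beta-type integral that stays bounded precisely because $(i_n-1)/(n-1)$ stays away from the singularity at $u=1$ and because $1-2H>-1$ handles the endpoint $u=0$ (which is why you need no special case for $i=1$). Your route dispenses with Kronecker's lemma and yields the explicit rate $O(n^{2H-2})$, which is in fact the same rate implicit in the paper's estimates; both arguments use $H<1$ in the same essential way, so neither is more general, but yours is somewhat more self-contained.
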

\begin{proof}
Note first that, since the function $\phi_n^H$ is increasing, we have:
$$ I_n(1)\leq \frac{(n-2)^{H-\frac12}}{\frac32-H}\left[\left(1+\frac{1}{n-2}\right)^{H-\frac12} -1\right].$$
Plugging this in \eqref{ej1}, we get
\begin{equation}\label{cjn1}
j_n(1)\xrightarrow[n\rightarrow\infty]{}0. 
\end{equation}
Similarly, for $1<i\leq i_n-1$, we deduce that:
$$I_n(i)\leq \frac{1}{(i-1)^{H-\frac12}}\phi_n^H(i_n-1).$$
Thus, using \eqref{ej1}, we obtain:
$$(j_n^H(i))^2\leq (\sigma c_H)^2  \frac{n^{2H-1}}{(i-1)^{2H-1}}\left({\phi_n^H}(i_n-1)\right)^2.$$
Moreover, from Lemma \ref{mgn} and Corollary \ref{iin}, we see that, for $n$ sufficiently large, $i_n-1\leq (2H-1)(n-1)$, and hence:
$$(j_n^H(i))^2\leq (\sigma c_H)^2\frac{n^{2H-1}}{(i-1)^{2H-1}}\left({\phi_n^H}\left((2H-1)(n-1)\right)\right)^2.$$
In addition, using the definition of $\phi_n^H$, we see that
$$\phi_n^H\left((2H-1)(n-1)\right)\leq n^{H-\frac12}\left[\left(1+\frac{1}{(2-2H)(n-1)}\right)^{H-\frac12}-1\right].$$
Therefore, we can choose $c_H^*>0$ such that for any $n$ large enough:
$$(j_n^H(i))^2\leq \frac{c_H^*}{(i-1)^{2H-1} \,n^{4-4H}}.$$
On the other hand, since:
$$\sum\limits_{i=1}^{\infty}\frac{1}{i^{2H-1} \,i^{4-4H}}=\sum\limits_{i=1}^{\infty}\frac{1}{i^{3-2H}}<\infty,$$
by Kronecker's lemma (see Lemma 4.21 in \cite{Kall}), we conclude that:
$$\frac{1}{n^{4-4H}}\sum\limits_{i=1}^{i_n-1}\frac{1}{i^{2H-1}}\xrightarrow[n\rightarrow\infty]{}0.$$
This together with \eqref{cjn1} implies that
$$\Var(\bar{\Ys}_n^{H})\leq j_n^H(1)^2+\frac{c_H^*}{n^{4-4H}}\sum\limits_{i=2}^{i_n-1}\frac{1}{(i-1)^{2H-1}}\xrightarrow[n\rightarrow\infty]{}0.$$
The result is proved.
\end{proof}

\subsection{\texorpdfstring{On the random variables $(\widehat{\Ys}^{H}_n)_{n\geq 1}$}{}}\label{hat}

We start this section, proving that the variance of $\widehat{\Ys}^{H}_n$ converges.
\begin{lemma}\label{cov}
We have that:
$$\Var(\widehat{\Ys}^{H}_n)\xrightarrow[n\rightarrow\infty]{}4g_H^2\,\sum\limits_{k=1}^\infty \rho_{h}^2(k)<\infty.$$
where $h=\frac{H}{2}+\frac{1}{4}\in(\frac{1}{2},\frac{3}{4})$ and $\rho_h$ is defined in \eqref{drh}.
\end{lemma}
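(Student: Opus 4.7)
The plan is to perform the substitution $k=n-i$ in the variance sum
\[
\Var(\widehat{\Ys}^{H}_n)=\sum_{i=i_n}^{n-1}(j_n^H(i))^2=\sum_{k=1}^{n-i_n}(j_n^H(n-k))^2,
\]
and then to pass to the limit via a dominated convergence type argument. The computation splits into (a) a pointwise limit for each fixed $k$, and (b) a uniform summable domination plus a vanishing tail.

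\textbf{Step 1: pointwise limit.} Starting from the explicit formula \eqref{fj}, I would substitute $x=n-k-1+s$ with $s\in[0,1]$ to rewrite
\[
j_n^H(n-k)=\sigma\,C_H\!\int_0^1\!\!(n-k-1+s)^{\frac12-H}\!\int_0^1\!(v+n-1)^{H-\frac12}(v+k-s)^{H-\frac32}\,dv\,ds.
\]
For every fixed $k\ge1$, $(n-k-1+s)^{1/2-H}(v+n-1)^{H-1/2}\to 1$ uniformly in $s,v\in[0,1]$. Evaluating the inner double integral explicitly yields
\[
\int_0^1\!\!\int_0^1(v+k-s)^{H-\frac32}\,dv\,ds=\frac{1}{(H-\tfrac12)(H+\tfrac12)}\bigl[(k+1)^{H+\frac12}+(k-1)^{H+\frac12}-2k^{H+\frac12}\bigr],
\]
and since $2h=H+\tfrac12$ this equals $\frac{2\rho_h(k)}{(H-\tfrac12)(H+\tfrac12)}$. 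Using $C_H=c_H(H-\tfrac12)$ and $g_H=\sigma c_H/(H+\tfrac12)$, one obtains
$j_n^H(n-k)\longrightarrow 2g_H\rho_h(k)$ as $n\to\infty$, hence $(j_n^H(n-k))^2\to 4g_H^2\rho_h^2(k)$.

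\textbf{Step 2: domination on the bulk $k\le n/2$.} Using the upper bound in \eqref{ej1} together with the mean value theorem applied to $y\mapsto y^{H-1/2}$ on $(n-1-x,n-x)\subset(k-1,k+1)$, I would derive
\[
\phi_n^H(x)\le (H-\tfrac12)(k-1)^{H-\frac32}\qquad (k\ge2),\qquad x^{\frac12-H}\le(n-k-1)^{\frac12-H},
\]
so that $I_n(n-k)\le (H-\tfrac12)(k-1)^{H-3/2}(n-k-1)^{1/2-H}$ and
\[
(j_n^H(n-k))^2\le C\Bigl(\tfrac{n}{n-k-1}\Bigr)^{2H-1}(k-1)^{2H-3}.
\]
For $k\le n/2$, the prefactor is bounded by a constant, so $(j_n^H(n-k))^2\le C'(k-1)^{2H-3}$, and this bound is summable since $2H-3<-1$.

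\textbf{Step 3: negligibility of the tail $k>n/2$.} Here I would keep $(k-1)^{H-3/2}\le Cn^{H-3/2}$ (since $H-\tfrac32<0$) but bound $(n-k-1)^{1/2-H}$ with $n-k-1=i-1\ge i_n-1$. Summing, and using $\sum_{i=i_n}^{n/2}(i-1)^{1-2H}=O(n^{2-2H})$, the total tail contribution is $O(n^{4H-4}\cdot n^{2-2H})=O(n^{2H-2})\to 0$.

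\textbf{Step 4: conclusion.} Combining the bulk domination (which permits termwise passage to the limit by dominated convergence) with the vanishing tail gives
\[
\Var(\widehat{\Ys}^{H}_n)\longrightarrow \sum_{k=1}^\infty 4g_H^2\rho_h^2(k),
\]
and the series converges because $\rho_h^2(k)=O(k^{4h-4})=O(k^{2H-3})$ with $2H-3<-1$.

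The main obstacle is Step~3: a naive application of the bound from Step~2 over the whole range $k\in[1,n-i_n]$ diverges because $(n/(n-k-1))^{2H-1}$ blows up near the left endpoint. Splitting at $k=n/2$ and exploiting the smallness of $(k-1)^{H-3/2}$ for large $k$ to offset the loss in $(n-k-1)^{1/2-H}$ is the crucial observation, and the cancellation $4H-4+2-2H=2H-2<0$ is what ultimately makes the tail vanish.
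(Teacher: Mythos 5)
Your proof is correct, and its skeleton (reindex by $k=n-i$, identify the pointwise limit $j_n^H(n-k)\to 2g_H\rho_h(k)$, then pass to the limit in the sum) is the same as the paper's; the difference lies in how the passage to the limit is justified. The paper obtains a single uniform domination $j_n^H(n-k)\leq M\rho_h(k)$ valid over the \emph{entire} range $1\leq k\leq n-i_n$: since Lemma \ref{mgn} and Corollary \ref{iin} give $i_n\sim(H-\tfrac12)(n-1)$, one has $n-k-1\geq i_n-1\geq c\,n$ throughout, so the prefactor $\bigl(n/(n-k-1)\bigr)^{H-1/2}$ in the upper bound of \eqref{ej1} is uniformly bounded and dominated convergence applies in one stroke. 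You instead split at $k=n/2$, dominate the bulk by the summable sequence $C'(k-1)^{2H-3}$, and show the remaining piece is $O(n^{2H-2})\to0$ via the exponent cancellation $4H-4+2-2H=2H-2<0$. Your route buys some robustness --- it only needs $i_n\to\infty$ (so that $(i-1)^{1-2H}$ is summed away from $i=1$), not the precise linear growth of $i_n$ --- at the cost of the extra bulk/tail decomposition; the paper's route is shorter but leans on the asymptotics of the turning point $x_n$. Also note that the paper derives the pointwise limit from the two-sided bounds in \eqref{ej1} (whose integrated form already produces the second difference $(k+1)^{H+1/2}+(k-1)^{H+1/2}-2k^{H+1/2}$), whereas you recompute it directly from \eqref{fj}; both give the same constant $2g_H\rho_h(k)$.
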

\begin{proof}
Note first that:
$$\Var(\widehat{\Ys}^{H}_n)=\sum\limits_{i=i_n}^{n-1}(j_n^H(i))^2=\sum\limits_{k=1}^{n-i_n}(j_n^H(n-k))^2.$$
On the other hand, using \eqref{ej1}, we obtain for $1\leq k\leq n-i_n$:
$$j_n^H(n-k)\leq \sigma\, c_H\left(\frac{n}{n-k-1}\right)^{H-\frac12}\left(\frac{(k+1)^{H+\frac12}+{(k-1)}^{H+\frac12}-2{k}^{H+\frac12}}{H+\frac12}\right)$$
and
$$j_n^H(n-k)\geq \sigma\, c_H\left(\frac{n-1}{n-k}\right)^{H-\frac12}\left(\frac{(k+1)^{H+\frac12}+{(k-1)}^{H+\frac12}-2{k}^{H+\frac12}}{H+\frac12}\right).$$
It follows that, for any $k\geq 1$:
$$\lim\limits_{n\rightarrow\infty}j_n^H(n-k)=\sigma\, c_H\left(\frac{(k+1)^{H+\frac12}+{(k-1)}^{H+\frac12}-2{k}^{H+\frac12}}{H+\frac12}\right)=2g_H\,\rho_h(k).$$
In the same way, using Lemma \ref{mgn}, Corollary \ref{iin} and the previous upper bound for $j_n^H(n-k)$, one can find a constant $M>0$, such that for any $n$ sufficiently large:
$$j_n^H(n-k)\leq M\,\rho_h(k).$$
Finally, since $\sum_{k=1}^\infty \rho_{h}^2(k)<\infty$, the desired result follows as an application of the dominated convergence theorem.
\end{proof}

Motivated by Lemma \ref{cov}, we introduce the random sequence $(\Upsilon^H_n)_{n\geq1}$:
$$\Upsilon^H_n:=\sum\limits_{k=1}^{n-i_n}j_n^H(n-k)\,\xi_k,\quad n\in\Nb.$$
It is clear that $\Upsilon^H_n\overset{d}{=}\widehat{\Ys}^{H}_n$. The next proposition reinforces Lemma \ref{cov}, with the help of the random variables $\Upsilon^H_n$, and permits to conclude the convergence in law of the random variables $\widehat{\Ys}^{H}_n$. 
\begin{proposition}\label{sum2}
We have that $\Upsilon^H_n\xrightarrow[n\rightarrow\infty]{L^2}\Ys_H$ and therefore $\widehat{\Ys}^{H}_n\xrightarrow[n\rightarrow\infty]{d}\Ys_H$, where $\Ys_H$ is defined in \eqref{dyh}.
\end{proposition}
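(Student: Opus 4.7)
My plan is to prove the $L^2$ convergence $\Upsilon^H_n\to\Ys_H$ by a direct orthogonal computation, from which convergence in distribution of $\widehat{\Ys}^H_n$ follows at once.

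Since the $\xi_k$ are orthonormal in $L^2(P)$ and $\sum_{k=1}^\infty\rho_h^2(k)<\infty$ by Lemma~\ref{cov}, the series defining $\Ys_H$ also converges in $L^2$ (not merely a.s.), so $\E[\Ys_H^2]=4g_H^2\sum_{k=1}^\infty\rho_h^2(k)$. Using the same $\xi_k$'s in both random variables, I would write
\begin{equation*}
\Upsilon^H_n-\Ys_H \;=\; \sum_{k=1}^{n-i_n}\bigl(j_n^H(n-k)-2g_H\rho_h(k)\bigr)\xi_k \;-\; 2g_H\sum_{k=n-i_n+1}^{\infty}\rho_h(k)\,\xi_k,
\end{equation*}
and apply orthogonality of the $\xi_k$ to obtain
\begin{equation*}
\E\bigl[(\Upsilon^H_n-\Ys_H)^2\bigr] \;=\; \sum_{k=1}^{n-i_n}\bigl(j_n^H(n-k)-2g_H\rho_h(k)\bigr)^2 \;+\; 4g_H^2\sum_{k=n-i_n+1}^{\infty}\rho_h^2(k).
\end{equation*}

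It then remains to show that both sums on the right vanish as $n\to\infty$. For the tail, the proof of Lemma~\ref{cov} (via Lemma~\ref{mgn} and Corollary~\ref{iin}) yields $i_n\leq(2H-1)(n-1)$ for all $n$ large; since $H<1$, this gives $n-i_n\geq(2-2H)\,n-(2H-1)\to\infty$, and combined with the summability of $\rho_h^2(k)$ the tail is forced to zero. For the first sum, I would apply dominated convergence with respect to the counting measure on $\Nb$: the pointwise convergence $j_n^H(n-k)\to 2g_H\rho_h(k)$ for each fixed $k$ and the uniform bound $j_n^H(n-k)\leq M\rho_h(k)$ for $n$ large and $1\leq k\leq n-i_n$ are both established in the proof of Lemma~\ref{cov}, so $(M+2g_H)^2\rho_h^2(k)$ serves as an integrable (i.e., summable) majorant. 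Hence each summand tends to $0$, and the whole sum tends to $0$. This establishes $\Upsilon^H_n\to\Ys_H$ in $L^2$. Since $\widehat{\Ys}^H_n\stackrel{d}{=}\Upsilon^H_n$ and $L^2$ convergence implies convergence in distribution, we conclude $\widehat{\Ys}^H_n\xrightarrow[n\to\infty]{d}\Ys_H$.

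The main obstacle is an accounting point rather than a hard analytic estimate: one must verify that $n-i_n\to\infty$ (otherwise the tail of $\Ys_H$ cannot be controlled) and simultaneously that the bounds of Lemma~\ref{cov} hold \emph{uniformly} in the range $\{i_n,\dots,n-1\}$. Both facts are already packaged in the proof of Lemma~\ref{cov}, so the argument really amounts to reassembling those ingredients into a single dominated-convergence statement on $\ell^2(\Nb)$.
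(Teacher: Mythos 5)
Your argument is correct and follows essentially the same route as the paper: the same orthogonal decomposition of $\Upsilon^H_n-\Ys_H$, the same splitting into a main sum handled by dominated convergence (using the pointwise limits and the bound $j_n^H(n-k)\leq M\rho_h(k)$ from the proof of Lemma~\ref{cov}) and a tail controlled by $\sum_k\rho_h^2(k)<\infty$. Your explicit verification that $n-i_n\to\infty$ via $i_n\leq(2H-1)(n-1)$ is a small point the paper leaves implicit, but it does not change the proof.
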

\begin{proof}
Let's denote $k_n=n-i_n$. It is straightforward to see that:
\begin{align*}
 E[(\Upsilon^H_n-\Ys_H)^2]&=E\left[\left(\sum\limits_{k=1}^{k_n}\left(j_n^H(n-k)-2g_H\rho_{h}(k)\right)\,\xi_k-2g_H\sum\limits_{k=k_n+1}^{\infty}\rho_{h}(k)\xi_k\right)^2\right]\\
 &=\sum\limits_{k=1}^{k_n}\left(j_n^H(n-k)-2g_H\rho_{h}(k)\right)^2+4g_H^2\sum\limits_{k=k_n+1}^{\infty}\rho_{h}^2(k).
\end{align*}
Note that the convergence to $0$ of the second sum is guaranteed as $\sum_{k=1}^\infty \rho_{h}^2(k)<\infty.$ 
It remains to show that the first sum on the right-hand side also converges to $0$. For this, using the bounds obtained in the proof of the Lemma \eqref{cov}, we see that for each $k\leq k_n$:
$$j_n^H(n-k)-2g_H\rho_{h}(k)\xrightarrow[n\rightarrow\infty]{}0$$
and $|j_n^H(n-k)-2g_H\rho_{h}(k)|\leq C\rho_{h}(k) $ for some constant $C>0$. Therefore, the result is obtained as an application of the dominated convergence theorem.

For the second statement, we use that convergence in $L^2$ implies convergence in law and that $\Upsilon^H_n\overset{d}{=}\widehat{\Ys}^{H}_n$. 
\end{proof}

\subsection{\texorpdfstring{On the convergence of $(\Ys^H_n)_{n\geq 1}$ and applications}{}}
Now we have all the necessary elements to establish the convergence of the random variables $(\Ys^H_n)_{n\geq 1}$, which is provided in the next theorem.
\begin{theorem}
We have that $\Ys^H_n\xrightarrow[n\rightarrow\infty]{d}\Ys_H.$
\end{theorem}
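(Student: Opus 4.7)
The plan is to exploit the decomposition $\Ys^H_n = \bar{\Ys}_n^H + \widehat{\Ys}^H_n$ introduced at the start of Section~\ref{Sec5}, which splits the random walk according to the monotonicity break of $I_n$ at index $i_n$. The two summands are independent by construction, and the two preceding subsections have already pinned down their asymptotic behaviours: Proposition~\ref{sum1} gives $\bar{\Ys}_n^H \xrightarrow{L^2} 0$, while Proposition~\ref{sum2} gives $\widehat{\Ys}^H_n \xrightarrow{d} \Ys_H$.

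The strategy, then, is a one-line application of Slutsky's theorem. First I would note that $L^2$-convergence implies convergence in probability, so $\bar{\Ys}_n^H \xrightarrow{P} 0$. Since the limit is a constant, Slutsky's lemma applies to the sum of a sequence converging in probability to a constant with a sequence converging in distribution, yielding
\begin{equation*}
\Ys^H_n \;=\; \bar{\Ys}_n^H + \widehat{\Ys}^H_n \;\xrightarrow[n\to\infty]{d}\; 0 + \Ys_H \;=\; \Ys_H.
\end{equation*}

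There is essentially no obstacle left at this stage, since the real analytic work has been carried out in Propositions~\ref{sum1} and~\ref{sum2}. The only point that deserves a remark is that one should not attempt to combine the two convergences through independence (for example by multiplying characteristic functions and passing to the limit): the cleaner and conceptually correct route is Slutsky, which only requires that $\bar{\Ys}_n^H$ tend to a \emph{constant} in probability. The independence of $\bar{\Ys}_n^H$ and $\widehat{\Ys}^H_n$ is not used here, although it was crucial in the variance computations of the previous section.
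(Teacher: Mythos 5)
Your argument is correct and is exactly the proof given in the paper: convert the $L^2$-convergence of $\bar{\Ys}_n^{H}$ from Proposition~\ref{sum1} into convergence in probability to the constant $0$, take the distributional convergence of $\widehat{\Ys}^{H}_n$ from Proposition~\ref{sum2}, and conclude by Slutsky's theorem. Your closing remark that independence is not needed at this step is also accurate.
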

\begin{proof}
Proposition~\ref{sum1} implies that $\bar{\Ys}_n^{H}\xrightarrow[n\rightarrow\infty]{p}0$, and  since, by Proposition~\ref{sum2}, $\widehat{\Ys}^{H}_n\xrightarrow[n\rightarrow\infty]{d}\Ys_H$ we obtain by Slutski's theorem that
$$\Ys^H_n=\bar{\Ys}_n^{H}+\widehat{\Ys}^{H}_n\xrightarrow[n\rightarrow\infty]{d}\Ys_H.$$
\end{proof}

\begin{theorem}
The law of $\Ys_H$ is absolutely continuous with respect to the Lebesgue measure, its density $f_H$ is symmetric, bounded, $L^2(\Rb)$-integrable and has non compact support. 
\end{theorem}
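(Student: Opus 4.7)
The plan is to work with the characteristic function
$$\phi_H(t) := E\bigl[e^{it\Ys_H}\bigr] = \prod_{k=1}^\infty \cos\bigl(2g_H\,\rho_h(k)\,t\bigr),$$
and to establish that it decays faster than any polynomial at infinity. Once such a bound is in place, Fourier inversion produces a continuous bounded density $f_H$ with $\|f_H\|_\infty \leq (2\pi)^{-1}\|\phi_H\|_1$, Plancherel gives $f_H\in L^2(\Rb)$, and the symmetry of $f_H$ is immediate from the distributional identity $\Ys_H\stackrel{d}{=}-\Ys_H$.

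To prove the decay of $\phi_H$, set $b_k := 2g_H\,\rho_h(k)$; by \eqref{drh} we have $b_k \sim c_1 k^{2h-2}$ for some $c_1>0$. For $|t|$ large, let $K(t)$ be the smallest index such that $|b_k t|\leq \pi/2$ for all $k\geq K(t)$; then $K(t) \asymp |t|^{1/(2-2h)}$. Bounding the initial factors trivially by $1$ and applying the elementary inequality $|\cos x|\leq \exp(-2x^2/\pi^2)$, valid for $|x|\leq\pi/2$, to the remaining ones, I obtain
$$|\phi_H(t)| \leq \exp\Bigl(-\tfrac{2t^2}{\pi^2}\sum_{k\geq K(t)} b_k^2\Bigr).$$
Since $4h-3<0$, a tail comparison yields $\sum_{k\geq N} b_k^2 \asymp N^{4h-3}$, and the arithmetic identity $2 + (4h-3)/(2-2h) = 1/(2-2h)$ leads to $|\phi_H(t)|\leq \exp\bigl(-c\,|t|^{1/(2-2h)}\bigr)$ for $|t|$ large. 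Because $h\in(\tfrac12,\tfrac34)$, the exponent $1/(2-2h)$ lies in $(1,2)$, so the decay is super-polynomial and $\phi_H\in L^1(\Rb)\cap L^2(\Rb)$, as required.

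For the non-compactness of $\mathrm{supp}(f_H)$, I would decompose $\Ys_H = S_N + R_N$ with $S_N := 2g_H\sum_{k=1}^N \rho_h(k)\,\xi_k$ and $R_N$ an independent tail whose variance tends to $0$ (this variance decay is already contained in the computations of Lemma \ref{cov}). On the event $\{\xi_1=\cdots=\xi_N=1\}$, which has probability $2^{-N}$, $S_N$ equals $M_N := 2g_H\sum_{k=1}^N \rho_h(k)$; combining independence with $P(|R_N|\leq 1)\geq 1/2$ for $N$ large yields $P(\Ys_H\geq M_N-1)\geq 2^{-N-1}$. Since $\sum_{k\geq 1}\rho_h(k)=\infty$ forces $M_N\to\infty$, this gives $P(\Ys_H>M)>0$ for every $M>0$, and symmetry handles the negative half-line. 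The main technical step is the exponential decay estimate on $\phi_H$, where one must correctly locate the crossover index $K(t)$ and verify the exponent arithmetic; everything else reduces to routine Fourier-analytic or tail-probability arguments.
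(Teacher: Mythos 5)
Your proof is correct. The core of the argument --- writing the characteristic function as the infinite product $\prod_{k}\cos\left(2g_H\rho_h(k)t\right)$, locating the crossover index $K(t)\asymp|t|^{1/(2-2h)}$ beyond which each factor admits a Gaussian-type bound, and deducing the stretched-exponential estimate $|\phi_H(t)|\leq\exp\left(-c|t|^{1/(2-2h)}\right)$ with exponent $1/(2-2h)\in(1,2)$ --- is the same as the paper's, down to the exponent arithmetic; the paper uses $\cos x\leq 1-\frac{x^2}{\pi}$ where you use $|\cos x|\leq e^{-2x^2/\pi^2}$ (both valid on $[0,\pi/2]$, and an immaterial difference), and both arguments then conclude boundedness, $L^2$-integrability and symmetry of $f_H$ by the same routine Fourier steps. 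Where you genuinely diverge is the non-compactness of the support: the paper disposes of it in one line by appealing to the uncertainty principle (the Fourier transform $F_H$ and the density $f_H$ cannot both decay too fast at infinity), whereas you give a direct probabilistic argument, forcing $\xi_1=\cdots=\xi_N=1$ to drive the partial sum to $M_N\to\infty$ (using $\sum_{k}\rho_h(k)=\infty$) and controlling the independent tail $R_N$ by Chebyshev since its variance $4g_H^2\sum_{k>N}\rho_h^2(k)$ vanishes. Your route is more elementary and self-contained: it avoids the Fourier-analytic black box and yields the quantitative statement $P(\Ys_H\geq M_N-1)\geq 2^{-N-1}$, i.e.\ explicit positive mass in every tail, which the uncertainty principle does not provide; the paper's route is shorter but leans on an external and somewhat informally invoked result. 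Both are sound.
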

\begin{proof} We claim that the characteristic function of $\Ys_H$, $F_H(v)=E[e^{i v \Ys_H}]$, decays faster than exponentially. If this is true, then $F_H$ is in $L^2(\Rb)$ and  the law of $\Ys_H$ admits a density function $f_H$ in $L^2(\Rb)$ (Lemma 2.1 in \cite{Ber}). The relation between the $L^2$-norms of $F_H$ and $f_H$ is given by the Plancherel's theorem. We can also deduce that $F_H$ is in $L^1(\Rb)$, which implies that $f_H$ is bounded (Corollary 5.1, Chapter 9 in \cite{Res}). The fact that $f_H$ is symmetric comes from the symmetry of the law of $\Ys_H$. The last assertion is a consequence of the uncertainty principle, which informally asserts that $F_H$ and $f_H$ cannot both decay too fast at infinity (see for example \cite{Horm}).

Now, we turn to the proof of the claim. Since the series defining $\Ys_H$ is almost surely convergent (and therefore in distribution), we deduce that:
\begin{align*}
 E[e^{iv\Ys_H}]&=\lim\limits_{n\rightarrow\infty}E\left[\exp\left(2iv g_H\sum\limits_{k=1}^{n}\rho_h(k)\,\xi_k\right)\right]\\&=\lim\limits_{n\rightarrow\infty}E\left[\prod\limits_{k=1}^{n}\exp\left(2iv g_H\rho_h(k)\,\xi_k\right)\right]
 =\lim\limits_{n\rightarrow\infty}\prod\limits_{k=1}^{n}E\left[\exp\left(2iv g_H\rho_h(k)\,\xi_k\right)\right]\\&=\lim\limits_{n\rightarrow\infty}\prod\limits_{k=1}^{n}\cos\left(2v g_H\rho_h(k)\right)=\prod\limits_{k=1}^{\infty}\cos\left(2v g_H\rho_h(k)\right).
\end{align*}
We obtain first bounds for $\cos(u \rho_H(k))$. We assert that for any $x\in(0,\pi/2)$:
$$0<\cos(x)\leq 1-\frac{x^2}{\pi}.$$
In order to prove that, we consider the function $f$ defined by $f(x)=1-\frac{x^2}{\pi}-\cos(x)$. Since $f(0)=0$, it would be enough to prove that $f$ is increasing in $(0,\pi/2)$. This is indeed the case, as for each $x\in(0,\pi/2)$:
$$f'(x)=-\frac{2x}{\pi}+\sin(x)\geq 0,$$
which proves our assertion (the last inequality follows from the concavity of the sinus function on $(0,\pi/2)$).

On the other hand, since $\rho_h(k)\sim h(2h-1)k^{2h-2}$ when $k$ goes to infinity, we can find $k_0$ and $b_H>\gamma_H>0$ such that, for any $k\geq k_0$:
$$\gamma_H\, k^{-\beta} \leq\rho_h(k)\leq b_H\, k^{-\beta}$$
where $\beta=2-2h\in(1/2,1)$. Now, for each $u>0$, we define:
$$k(u)=k_0\vee\inf\left\{k\in\Nb: \frac{u\, b_H}{k^\beta}\leq \frac{\pi}{2}\right\}.$$
From the definition, we have that for any $k\geq k(u)$:
$$0<\frac{u\, \gamma_H}{k^\beta}\leq u\,\rho_h(k)\leq \frac{u\, b_H}{k^\beta}\leq \frac{\pi}{2}.$$
In particular,
$$\prod\limits_{k=1}^{\infty}|\cos(u \rho_h(k))|\leq \prod\limits_{k=k(u)}^{\infty}|\cos(u \rho_h(k))|\leq \prod\limits_{k=k(u)}^{\infty}\left(1-\frac{(u\rho_h(k))^2}{\pi}\right).$$
and then:
$$\prod\limits_{k=1}^{\infty}|\cos(u \rho_h(k))|\leq \prod\limits_{k=k(u)}^{\infty}\left(1-\frac{(u\,\gamma_H)^2}{\pi k^{2\beta}}\right)$$
On the other hand:
$$\ln\left(\prod\limits_{k=k(u)}^{\infty}\left(1-\frac{(u\,\gamma_H)^2}{\pi k^{2\beta}}\right)\right)=\sum\limits_{k=k(u)}^{\infty}\ln\left(1-\frac{(u\,\gamma_H)^2}{\pi k^{2\beta}}\right).$$
Note that for $u$ big enough, $k(u)>k_0$ and hence $k(u)=\inf\{k\in\Nb: \frac{u\, b_H}{k^\beta}\leq \frac{\pi}{2}\}$. This implies that $(k(u)-1)^\beta\leq \frac{2ub_H}{\pi}\leq {k(u)}^\beta$ and then: 
\begin{align*}
 \sum\limits_{k=k(u)}^{\infty}\ln\left(1-\frac{(u\,\gamma_H)^2}{\pi k^{2\beta}}\right)&\leq \sum\limits_{k=k(u)}^{\infty}\ln\left(1-\frac{\pi\gamma_H^2}{(2b_H)^2 }\left(\frac{k(u)-1}{k}\right)^{2\beta}\right)\\
 &\leq  (k(u)-1)\int\limits_1^{\infty}\ln\left(1-\frac{\pi\gamma_H^2}{(2b_H)^2 x^{2\beta}}\right)dx\\
 &\leq -\theta_H u^{1/\beta},
\end{align*}
for some constant $\theta_H >0$ and $u>0$ sufficiently large. Consequently, setting $u=2g_H |v|$ and using the symmetry of $f_H$, we deduce that for $|v|$ sufficiently large: 
$$|E[e^{iv\Ys_H}]|\leq e^{-\theta_H (2g_H\,|v|)^{1/\beta}}.$$
The claim is then proved.
\end{proof}

\begin{proof}[Proof of Theorem~\ref{proparbpt}]
Using that $\lim_{n\rightarrow\infty}g_n^H=g_H$, $\lim_{n\rightarrow\infty}\frac{a_n^{(N_n)} N_n^H}{g_n^H}=0$ and $\Ys^H_n\xrightarrow[n\rightarrow\infty]{d}\Ys_H$, we get by Slutzky's theorem that
$$\frac{\Ys^H_n}{g_n^H}+\frac{a_n^{(N_n)} N_n^H}{g_n^H}\xrightarrow[n\rightarrow\infty]{d}\frac{\Ys_H}{g_H}.$$
Since the law of $\Ys_H$ is absolutely continuous, we apply the Portmanteau theorem and Lemma 17.2 of \cite{Will} to deduce that
\begin{align*}
 \lim\limits_{n\rightarrow\infty}P\left(|\Ys^H_n+a_n^{(N_n)} N_n^H|\geq g_n^H\right)&=\lim\limits_{n\rightarrow\infty}P\left(\frac{|\Ys^H_n+a_n^{(N_n)} N_n^H|}{g_n^H}\geq 1\right)\\
 &=P\left(\frac{|\Ys_H|}{g_H}>1\right)=P(|\Ys_H|>g_H).
\end{align*}
The proof of the first statement is achieved using \eqref{aps} and the fact that the density $f_H$ has no compact support.

For the second statement in the theorem, first note that:
\begin{equation}\label{decomp}
  |\As^{(N,H)}|=\sum\limits_{n=1}^N|\As_n^{(N,H)}|=\sum\limits_{n=1}^NP(|\Ys_n^H + a_n^{(N)} N^H|\geq g_n^H)\, 2^{n-1}.
\end{equation}
Now, fix $\varepsilon>0$ and consider $N$ sufficiently large in order to satisfy $||a||_\infty /N^{1-H}\leq \varepsilon$. For such $N$ and $n\leq N$, we see that:
$$P(|\Ys_n^H|\geq g_n^H +\varepsilon) \leq P(|\Ys_n^H + a_n^{(N)} N^H|\geq g_n^H)\leq P(|\Ys_n^H|\geq g_n^H -\varepsilon),$$
and then, plugging this in \eqref{decomp}, we get: 
\begin{equation}\label{ineq}
\sum\limits_{n=1}^NP(|\Ys_n^H|\geq g_n^H +\varepsilon)\, 2^{n-1}\leq|\As^{(N,H)}|\leq \sum\limits_{n=1}^NP(|\Ys_n^H|\geq g_n^H -\varepsilon)\, 2^{n-1}.
\end{equation}
Additionally, as in the proof of the first statement above, we get:
$$P(|\Ys_n^H|\geq g_n^H +\varepsilon)\xrightarrow[n\rightarrow\infty]{}P(|\Ys_H|> g_H +\varepsilon),$$
and
$$P(|\Ys_n^H|\geq g_n^H -\varepsilon)\xrightarrow[n\rightarrow\infty]{}P(|\Ys_H|> g_H -\varepsilon).$$
Applying C\`{e}saro's lemma (page 116 in \cite{Will}) to the sequences $\{P(|\Ys_n^H|\geq g_n^H +\varepsilon)\}_{n\geq 1}$ and $\{P(|\Ys_n^H|\geq g_n^H-\varepsilon)\}_{n\geq 1}$, \eqref{ineq} leads to:
\begin{align*}
P(|\Ys_H|> g_H +\varepsilon)&\leq\liminf_{N\rightarrow\infty}\frac{|\As^{(N,H)}|}{2^N-1}\leq\limsup_{N\rightarrow\infty}\frac{|\As^{(N,H)}|}{2^N-1}\leq P(|\Ys_H|> g_H -\varepsilon).
\end{align*}
The result follows by taking the limit when $\varepsilon$ tends to $0$.
\end{proof}
\begin{proof}[Proof of Proposition~\ref{proparbptH}]
Thanks to the Tchebysheff's inequality, we obtain that:
$$P(|\Ys_H|>g_H)\leq 4 \sum\limits_{k=1}^\infty\rho_h^2(k).$$
In addition, from Lemma \ref{mono}, we have that $\lim_{h\rightarrow\frac{1}{2}+}\sum_{k=1}^\infty\rho_h^2(k)=0,$
and, since $h=\frac{H}{2}+\frac14$, the first result follows.\\
From Corollary \ref{hc}, it follows that for $H>2h_c-1/2$, we have $\sum_{k=1}^\infty\rho_h^2(k)>\frac{1}{4}$. Hence, we can use the Paley-Zigmund inequality (Lemma 4.1 in \cite{Kall}) and a particular case of the Khintchine's inequality (see \cite{Khin}), to obtain:
$$P(|\Ys_H|>g_H)\geq \frac{1}{3}\left(1-\left(\frac{1}{4\sum\limits_{k=1}^\infty\rho_h^2(k)}\right)\right)^2.$$
On the other hand, we know from Lemma \ref{lbr} that $\lim_{h\rightarrow \frac34-}\sum_{k=1}^\infty\rho_h^2(k)=\infty.$
Combining this with the previous inequality, we obtain the desired result.
\end{proof}
\section{Proof of Theorem \ref{asp}}\label{Sec6}
In this section, we exploit the measurability properties of the random variables $\widehat{\Ys}^{H}_n$. Intuitively, these random variables depend asymptotically only on the tail $\sigma$-field, which is defined by:
$$\Ts_\infty=\bigcap\limits_{n\geq 1}\bigvee\limits_{k>n}\sigma(\xi_k)=\bigcap\limits_{n\geq 1}\sigma(\xi_k, k> n).$$
Since the random variables $(\xi_i)_{i\geq1}$ are independent, we know from the Kolmogorov 0-1 law (see Theorem 3.13 in \cite{Kall}) that $\Ts_\infty$  is $P$-trivial and that the $\Ts_\infty$-measurable random variables are constant. One could be tempted to say that $\Ys_H$ is then constant, which is in contradiction with the fact that its variance is strictly positive. This contradiction is only apparent and the reason is that the random variables $\widehat{\Ys}^{H}_n$ converge to $\Ys_H$ only in distribution and one can not conclude that $\Ys_H$ is $\Ts_\infty$-measurable. Anyhow, this naive idea leads to some interesting results.

\begin{lemma}\label{01l}
Consider a sequence of positive numbers $s_n$. For any subsequence $n_k$, the sets $\{|\widehat{\Ys}^{H}_{n_k}|>s_{n_k}\, \textrm{i.o.}\}$ and $\{|\widehat{\Ys}^{H}_{n_k}|>s_{n_k}\, \textrm{ult.}\}$ have probability $0$ or $1$.
\end{lemma}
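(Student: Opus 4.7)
The plan is to apply Kolmogorov's 0-1 law, by showing that each of the events in question lies in the tail $\sigma$-field $\Ts_\infty = \bigcap_{n\geq 1}\sigma(\xi_k, k>n)$. Since the sequence $(\xi_i)_{i\geq 1}$ is i.i.d., $\Ts_\infty$ is $P$-trivial, and the conclusion follows immediately.

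The key observation is that $\widehat{\Ys}^{H}_n = \sum_{i=i_n}^{n-1} j_n^H(i)\,\xi_i$ is $\sigma(\xi_{i_n},\ldots,\xi_{n-1})$-measurable, and that $i_n\to\infty$ by Corollary~\ref{iin} (invoked in Section~\ref{hat}). Therefore, given any integer $m\geq 1$, there exists $K=K(m)$ such that $i_{n_k}>m$ for all $k\geq K$, and consequently $\widehat{\Ys}^{H}_{n_k}$ is $\sigma(\xi_j, j>m)$-measurable for each $k\geq K$.

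Now I would write
\begin{equation*}
\{|\widehat{\Ys}^{H}_{n_k}|>s_{n_k}\ \textrm{i.o.}\}=\bigcap_{N\geq 1}\bigcup_{k\geq N}\{|\widehat{\Ys}^{H}_{n_k}|>s_{n_k}\}=\bigcap_{N\geq K}\bigcup_{k\geq N}\{|\widehat{\Ys}^{H}_{n_k}|>s_{n_k}\},
\end{equation*}
where the second equality uses that the sets $\bigcup_{k\geq N}\{|\widehat{\Ys}^{H}_{n_k}|>s_{n_k}\}$ are decreasing in $N$. Each event in the last expression lies in $\sigma(\xi_j, j>m)$, so the whole intersection does too. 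Since $m$ was arbitrary, the event belongs to $\Ts_\infty$. The same argument applies to
\begin{equation*}
\{|\widehat{\Ys}^{H}_{n_k}|>s_{n_k}\ \textrm{ult.}\}=\bigcup_{N\geq K}\bigcap_{k\geq N}\{|\widehat{\Ys}^{H}_{n_k}|>s_{n_k}\}\in \sigma(\xi_j, j>m),
\end{equation*}
using that the intersections $\bigcap_{k\geq N}\{\cdot\}$ are increasing in $N$.

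There is essentially no obstacle here: the only substantive input is the fact that $i_n\to\infty$, which was already needed in the analysis of $\widehat{\Ys}^{H}_n$ in Section~\ref{hat}. Once this is in hand, the proof is a clean application of the Kolmogorov 0-1 law via Theorem 3.13 in \cite{Kall}, exactly as foreshadowed in the preamble of the section.
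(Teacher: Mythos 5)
Your proof is correct and follows essentially the same route as the paper: show that both events lie in the tail $\sigma$-field $\Ts_\infty$ using the measurability of $\widehat{\Ys}^{H}_{n_k}$ with respect to $\sigma(\xi_{i_{n_k}},\ldots,\xi_{n_k-1})$ together with $i_{n_k}\to\infty$, and then invoke the Kolmogorov 0--1 law. If anything, your choice of a threshold $K(m)$ with $i_{n_k}>m$ for \emph{all} $k\geq K$ is slightly cleaner than the paper's phrasing, which picks a single index $k_m$ and tacitly uses monotonicity of $\ell\mapsto i_{n_\ell}$ when asserting $\bigcup_{k\geq\ell}\{|\widehat{\Ys}^{H}_{n_k}|>s_{n_k}\}\in\sigma\{\xi_i,\ i\geq i_{n_\ell}\}$.
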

\begin{proof}
By the Kolmogorov 0-1 law, it is enough to prove that both sets belong to the tail $\sigma$-field. More precisely, we have to prove that for all $m\geq 1$, both sets belong to $\sigma(\xi_i,i>m)$. 

First, fix $m\geq 1$. Since $i_k\rightarrow\infty$ and $n_k\rightarrow\infty$ when $k$ goes to infinity, we can find $k_m$ such that $i_{n_{k_m}}> m$. On the other hand, we have that:
$$\{|\widehat{\Ys}^{H}_{n_k}|>s_{n_k}\}\in \sigma\{\xi_i, i\geq i_{n_k}\}.$$
Thus,
$$\bigcup\limits_{k\geq \ell}\{|\widehat{\Ys}^{H}_{n_k}|>s_{n_k}\}\in\sigma\{\xi_i, i\geq i_{n_\ell}\}\quad\textrm{and}\quad\bigcap\limits_{k\geq \ell}\{|\widehat{\Ys}^{H}_{n_k}|>s_{n_k}\}\in\sigma\{\xi_i, i\geq i_{n_\ell}\}.$$
We conclude that:
$$\{|\widehat{\Ys}^{H}_{n_k}|>s_{n_k}\, \textrm{i.o.}\}=\bigcap\limits_{\ell\geq k_m}\bigcup\limits_{k\geq \ell}\{|\widehat{\Ys}^{H}_{n_k}|>s_{n_k}\}\in\sigma\{\xi_i, i\geq i_{n_{k_m}}\}\subseteq\sigma\{\xi_i, i>m\}$$
and
$$\{|\widehat{\Ys}^{H}_{n_k}|>s_{n_k}\, \textrm{ult.}\}=\bigcup\limits_{\ell\geq k_m}\bigcap\limits_{k\geq \ell}\{|\widehat{\Ys}^{H}_{n_k}|>s_{n_k}\}\in\sigma\{\xi_i, i\geq i_{n_{k_m}}\}\subseteq\sigma\{\xi_i, i>m\}$$
and the proof is completed.
\end{proof}
We denote $H_c:=2h_c-1/2\in(1/2,1)$, where $h_c$ is chosen like in Corollary \ref{hc}.
\begin{lemma}\label{01l2}
Let $n_k$ be a strictly increasing sequence of positive integers. For any $H<H_c$, there exists $\ep>0$:
$$P(|\widehat{\Ys}^{H}_{n_k}|>g_{n_k}^H(1-\ep)\, \textrm{ult.})=0$$
and for any $H>H_c$, there exists $\delta>0$:
$$P(|\widehat{\Ys}^{H}_{n_k}|>g_{n_k}^H(1+\delta)\, \textrm{i.o.})=1.$$
\end{lemma}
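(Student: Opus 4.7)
The plan is to combine the $0$-$1$ dichotomy provided by Lemma~\ref{01l} with the convergence in distribution $\widehat{\Ys}^H_{n_k}\xrightarrow{d}\Ys_H$ (Proposition~\ref{sum2}) and the quantitative upper/lower bounds on $P(|\Ys_H|>g_H)$ that are already present in the paper (Tchebysheff in the low-$H$ regime, Paley-Zygmund in the high-$H$ regime). In both cases, Lemma~\ref{01l} reduces the task to showing that the target probability is strictly less than $1$ (first statement) or strictly greater than $0$ (second statement), and Fatou/reverse Fatou applied to the indicator sequences then finishes the argument.

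For $H<H_c$, the plan is to exploit the Tchebysheff bound $P(|\Ys_H|>g_H)\leq 4\sum_{k\geq 1}\rho_h^2(k)$ together with the monotonicity of $h\mapsto\sum\rho_h^2(k)$ from Lemma~\ref{mono} and the definition $H_c=2h_c-1/2$, so that $4\sum\rho_h^2(k)<1$ whenever $h<h_c$. This yields $P(|\Ys_H|>g_H)<1$, and since $\Ys_H$ has a (continuous) density, one can select $\ep>0$ with $P(|\Ys_H|>g_H(1-\ep))<1$. Using that $g^H_{n_k}\to g_H$ and $\widehat{\Ys}^H_{n_k}\xrightarrow{d}\Ys_H$, together with the fact that every point is a continuity point of the law of $|\Ys_H|$, one obtains
\begin{equation*}
\lim_{k\to\infty}P\bigl(|\widehat{\Ys}^H_{n_k}|>g^H_{n_k}(1-\ep)\bigr)=P(|\Ys_H|>g_H(1-\ep))<1.
\end{equation*}
Fatou's lemma for sets then gives $P(|\widehat{\Ys}^H_{n_k}|>g^H_{n_k}(1-\ep)\,\textrm{ult.})<1$, and Lemma~\ref{01l} forces this probability to equal $0$.

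For $H>H_c$, I would recycle the Paley-Zygmund estimate from the proof of Proposition~\ref{proparbptH}: Corollary~\ref{hc} ensures $\sum\rho_h^2(k)>1/4$, hence $P(|\Ys_H|>g_H)>0$. Since $\{|\Ys_H|>g_H(1+\delta)\}\uparrow\{|\Ys_H|>g_H\}$ as $\delta\downarrow 0$, monotone convergence supplies some $\delta>0$ with $P(|\Ys_H|>g_H(1+\delta))>0$. The same Portmanteau passage to the limit as above yields $\lim_k P(|\widehat{\Ys}^H_{n_k}|>g^H_{n_k}(1+\delta))=P(|\Ys_H|>g_H(1+\delta))>0$. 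By reverse Fatou this forces $P(|\widehat{\Ys}^H_{n_k}|>g^H_{n_k}(1+\delta)\,\textrm{i.o.})>0$, and another invocation of Lemma~\ref{01l} upgrades the probability to $1$.

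The main delicate point, though hardly an obstacle, is the passage to the limit in $P(|\widehat{\Ys}^H_{n_k}|>g^H_{n_k}(1\mp\ep))$: the threshold depends on $k$, so one cannot apply Portmanteau's theorem directly at a fixed level. The remedy is a routine sandwich: since $g^H_{n_k}\to g_H$, for any $\eta>0$ one eventually has $g_H(1\mp\ep)(1-\eta)\leq g^H_{n_k}(1\mp\ep)\leq g_H(1\mp\ep)(1+\eta)$, so Portmanteau can be applied at the fixed continuity levels $g_H(1\mp\ep)(1\pm\eta)$, and one concludes by letting $\eta\downarrow 0$ via the continuity of $x\mapsto P(|\Ys_H|>x)$ guaranteed by the density of $\Ys_H$.
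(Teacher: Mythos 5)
Your proof is correct and follows essentially the same route as the paper: reduce to a zero--one event via Lemma~\ref{01l}, apply Fatou (resp.\ reverse Fatou), and use Tchebysheff below $H_c$ and Paley--Zygmund above $H_c$ together with Corollary~\ref{hc}. The only cosmetic difference is that you apply the moment bounds to the limit $\Ys_H$ and transfer back by Portmanteau (carefully handling the $k$-dependent threshold), whereas the paper applies Tchebysheff directly to $\widehat{\Ys}^{H}_{n_k}$ and chooses $\ep,\delta$ from the strict inequalities $\sum_k\rho_h^2(k)\lessgtr\frac{(1\mp\ep)^2}{4}$; both are sound.
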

\begin{proof}
Fix $H<H_c$. By Corollary \ref{hc}, we can choose $\ep$ such that $$\sum\limits_{k=1}^\infty\rho_h^2(k)<\frac{(1-\ep)^2}4<\frac14.$$ 
Using Fatou's lemma and Tchebysheff's inequality, we obtain:
$$P(|\widehat{\Ys}^{H}_{n_k}|>g_{n_k}^H(1-\ep)\, \textrm{ult.})\leq \liminf\limits_{k\rightarrow\infty}P(|\widehat{\Ys}^{H}_{n_k}|>g_{n_k}^H(1-\ep))\leq \frac4{(1-\ep)^2}\sum\limits_{k=1}^\infty\rho_h^2(k)<1,$$
Thus, the first result is a consequence of Lemma \ref{01l}.

Now, fix $H>H_c$ and choose $\delta>0$ such that
$$\sum\limits_{k=1}^\infty\rho_h^2(k)>\frac{(1+\delta)^2}4>\frac14.$$ 
Another application of Fatou's lemma, implies that:
$$P(|\widehat{\Ys}^{H}_{n_k}|>g_{n_k}^H(1+\delta)\, \textrm{i.o.})\geq \limsup\limits_{n\rightarrow\infty}P(|\widehat{\Ys}^{H}_{n_k}|>g_{n_k}^H(1+\delta)).$$
Appliying Paley-Zigmund inequality and Khintchine's inequality, we get:
\begin{align*}
\limsup\limits_{n\rightarrow\infty}P(|\widehat{\Ys}^{H}_{n_k}|>g_{n_k}^H(1+\delta))&=P(|\Ys_H|>g_H(1+\delta))\\
&\geq \frac{1}{3}\left(1-\left(\frac{(1+\delta)^2}{4\sum\limits_{k=1}^\infty\rho_h^2(k)}\right)\right)^2 >0,
\end{align*}
and the second statement follows from Lemma \ref{01l}.
\end{proof}

\begin{proof}[Proof of Theorem~\ref{asp}]
Since $\bar{\Ys}^H_n$ converges to $0$ in $L^2$, there is a subsequence $\bar{\Ys}_{n_k}^{H}$ convergent almost surely to $0$.
Let $H>H_c$ and $\delta>0$ as in Lemma \ref{01l2}. Note that
 \begin{align*}
  \{|\widehat{\Ys}^{H}_{n_k}|>(1+\delta)g_{n_k}^H\}=&\{|\widehat{\Ys}^{H}_{n_k}|>(1+\delta)g_{n_k}^H\}\cap\{|\bar{\Ys}_{n_k}^{H}|\leq\delta g_{n_k}^H\}\\
  &\cup\{|\widehat{\Ys}^{H}_{n_k}|>(1+\delta)g_{n_k}^H\}\cap\{|\bar{\Ys}_{n_k}^{H}|>\delta g_{n_k}^H\}\\
  \subseteq& \{|\Ys^H_{n_k}|>g_{n_k}^H\}\cup\{|\bar{\Ys}_{n_k}^{H}|>\delta g_{n_k}^H\},
 \end{align*}
which implies that
$$\{|\widehat{\Ys}^{H}_{n_k}|>(1+\delta)g_{n_k}^H\textrm{ i.o.}\}\subseteq\{|\Ys^H_{n_k}|>g_{n_k}^H\textrm{ i.o.}\}\cup\{|\bar{\Ys}_{n_k}^{H}|>\delta g_{n_k}^H\textrm{ i.o.}\}.$$
Since $\bar{\Ys}_{n_k}^{H}\xrightarrow{\textrm{a.s.}}0$ and $g_{n_k}^H\rightarrow g_H>0$, it follows that $P(\{|\bar{\Ys}_{n_k}^{H}|>\delta g_{n_k}^H\textrm{ i.o.}\})=0$. The desired statement is a consequence of Lemma \ref{01l2} and of the fact that 
$$\{|\Ys^H_{n_k}|>g_{n_k}^H\textrm{ i.o.}\}\subseteq \{|\Ys^H_{n}|>g_{n}\textrm{ i.o.}\}.$$
For $H<H_c$, let $\ep>0$ as in Lemma \ref{01l2}. Note that
 \begin{align*}
  \{|\Ys^H_{n_k}|>g_{n_k}^H\}=&\{|\Ys^H_{n_k}|>g_{n_k}^H\}\cap\{|\bar{\Ys}_{n_k}^{H}|\leq\ep g_{n_k}^H\}\\
  &\cup\{|\Ys^H_{n_k}|>g_{n_k}^H\}\cap\{|\bar{\Ys}_{n_k}^{H}|>\ep g_{n_k}^H\}\\
  \subseteq& \{|\widehat{\Ys}^{H}_{n_k}|>(1-\ep)g_{n_k}^H\}\cup\{|\bar{\Ys}_{n_k}^{H}|>\ep g_{n_k}^H\},
 \end{align*}
which implies that
$$\{|\Ys^H_{n_k}|>g_{n_k}^H\textrm{ ult.}\}\subseteq\{|\widehat{\Ys}^{H}_{n_k}|>(1-\ep)g_{n_k}^H\textrm{ ult.}\}\cup\{|\bar{\Ys}_{n_k}^{H}|>\ep g_{n_k}^H\textrm{ i.o.}\}.$$
Following the same reasoning as before, the desired statement is obtained using Lemma \ref{01l2} and the fact that 
$$\{|\Ys^H_{n_k}|>g_{n_k}^H\textrm{ ult.}\}\supseteq \{|\Ys^H_{n}|>g_{n}\textrm{ ult.}\}.$$
For the final statement, note that
 $$\{|\Ys^H_n|>g_n^H\textrm{ i.o.}\}\subseteq\{\exists\ n:\ |\Ys^H_n|>g_n^H\}$$
 and
 $$\{\exists\ n:\ |\Ys^H_n|>g_n^H\}\subseteq\{\exists\ n\in\{1,\ldots,N\}:\ |\Ys^H_n+a_n^{(N)}N^H|>g_n^H\textrm{ ult.}\}.$$
 Using \eqref{ap}, Fatou's lemma and Theorem \ref{asp} we obtain 
 $$\liminf_{N\to\infty}\frac{\As_\Ps^{(N,H)}}{2^{N-1}}\geq1$$
 and the result follows.
\end{proof}

%
\appendix
\section{Some technical results}
Consider the function $g_n:(0,n-1)\rightarrow (0,\infty)$ defined by $g_n(x)=x^{\frac12-H}\phi_n^H(x)$. The relation between this function and the coefficients $j_n^H(i)$ is given by \eqref{ej1}. The next lemma is the key ingredient when splitting the random variables $\Ys_n^H$. 
\begin{lemma}\label{mgn}
For each $n>1$, there exists a unique $x_n\in (0,n-1)$ such that the function $g_n$ is strictly decreasing in the interval $(0, x_n)$ and strictly increasing in the interval $(x_n,n-1)$. In addition, we have:
$$\lim\limits_{n\rightarrow\infty}\frac{x_n}{n-1}=H-\frac12.$$
\end{lemma}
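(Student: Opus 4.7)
Set $\alpha := H-\tfrac12 \in (0,\tfrac12)$, so that $g_n(x) = x^{-\alpha}\bigl[(n-x)^{\alpha} - (n-1-x)^{\alpha}\bigr]$ on $(0,n-1)$. The plan is to locate the unique critical point of $g_n$ via a direct computation, then extract its asymptotic position by an elementary expansion.

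First I would differentiate. Using $\tfrac{d}{dx}(n-x)^{\alpha} = -\alpha(n-x)^{\alpha-1}$ and collecting terms, a short computation (grouping $(n-x)^{\alpha} + x(n-x)^{\alpha-1} = n(n-x)^{\alpha-1}$, and similarly with $n$ replaced by $n-1$) yields
\[
g_n'(x) \;=\; -\alpha\, x^{-\alpha-1}\, h_n(x),\qquad \text{where } h_n(x) := n(n-x)^{\alpha-1} - (n-1)(n-1-x)^{\alpha-1}.
\]
Thus the sign of $g_n'$ is opposite to that of $h_n$, and locating the critical point reduces to analysing $h_n$.

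Next I would show $h_n$ has a unique zero. Factorise $h_n(x) = n(n-x)^{\alpha-1}\bigl(1 - F_n(x)\bigr)$ with
\[
F_n(x) \;=\; \frac{n-1}{n}\left(\frac{n-x}{n-1-x}\right)^{1-\alpha} \;=\; \frac{n-1}{n}\left(1 + \frac{1}{n-1-x}\right)^{1-\alpha}.
\]
Because $1-\alpha>0$ and $x\mapsto 1+\tfrac{1}{n-1-x}$ is strictly increasing on $(0,n-1)$, $F_n$ is strictly increasing with $F_n(0) = \bigl(\tfrac{n-1}{n}\bigr)^{\alpha} < 1$ and $F_n(x)\to\infty$ as $x\to(n-1)^-$. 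Hence $F_n$ crosses $1$ at a unique point $x_n\in(0,n-1)$; equivalently $h_n>0$ on $(0,x_n)$ and $h_n<0$ on $(x_n,n-1)$. By the formula for $g_n'$ above, $g_n$ is strictly decreasing on $(0,x_n)$ and strictly increasing on $(x_n,n-1)$, which is the first claim.

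Finally, solving $F_n(x_n)=1$ explicitly gives $\tfrac{n-x_n}{n-1-x_n} = t_n$ with $t_n := \bigl(\tfrac{n}{n-1}\bigr)^{1/(1-\alpha)}$, and isolating $x_n$ yields
\[
x_n \;=\; n - 1 - \frac{1}{t_n - 1}.
\]
For the asymptotics I would expand $t_n = \exp\!\bigl(\tfrac{1}{1-\alpha}\log\tfrac{n}{n-1}\bigr) = 1 + \tfrac{1}{(1-\alpha)(n-1)} + O(n^{-2})$, so that $\tfrac{1}{t_n-1} = (1-\alpha)(n-1) + O(1)$, and therefore
\[
\frac{x_n}{n-1} \;=\; 1 - \frac{1}{(n-1)(t_n-1)} \;\xrightarrow[n\to\infty]{}\; 1 - (1-\alpha) \;=\; \alpha \;=\; H-\tfrac12,
\]
as required. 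No step presents a genuine obstacle; the only care needed is in the algebraic simplification that produces $h_n$ and in verifying the monotonicity of $F_n$, after which both conclusions follow by direct computation.
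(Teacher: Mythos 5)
Your proposal is correct and follows essentially the same route as the paper: the paper differentiates $f_n(y)=g_n(1/y)$ instead of $g_n$ directly, but the resulting monotone bracket factor is exactly your $1-F_n(x)$ and the explicit formula $x_n=n-1-\frac{1}{t_n-1}$ coincides with the paper's $x_n=1/y_n$. The monotonicity argument and the limit computation are the same.
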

\begin{proof}
Let $f_n:(1/(n-1),\infty)\rightarrow(0,\infty)$ be given by $f_n(x):=g_n(1/x)$. Note that
$$f_n'(y)=(H-\frac12)\, n\, (n y-1)^{H-\frac32}\left[1-\frac{(n-1)}{n}\left(1+\frac{1}{n-1-\frac{1}{y}}\right)^{\frac32-H}\right].$$
The function in the square parenthesis is strictly increasing and equal to $0$ in only one point, given by 
$$y_n=\left(n-1 -\frac{1}{\left(1+\frac{1}{n-1}\right)^{\frac{2}{3-2H}}-1}\right)^{-1}>\frac{1}{n-1}.$$
We deduce that $f_n$ is strictly decreasing in $(1/(n-1),y_n)$ and strictly increasing in $(y_n,\infty)$. Setting $x_n:=1/y_n$, the desired monotonicity properties  of $g_n$ follow. The last result is obtained via a direct calculation of the limit of $x_n/(n-1)$.
\end{proof}
An important consequence of this lemma is given in the following corollary.
\begin{corollary}\label{iin}
Denote $i_n=\lfloor x_n \rfloor +1$. The function $I_n:\{1,...,n-1\}\rightarrow (0,\infty)$ given in Lemma \ref{ejg} is decreasing on $\{1,..,i_n-1\}$ and increasing on $\{i_n+1,...,n-1\}$.
\end{corollary}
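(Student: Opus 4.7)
The plan is to reduce the monotonicity of the discrete sequence $I_n(\cdot)$ to that of the continuous function $g_n(x) = x^{1/2-H}\phi_n^H(x)$ obtained in Lemma~\ref{mgn}, via the obvious identification $I_n(i) = \int_{i-1}^{i} g_n(x)\,dx$. The definition $i_n = \lfloor x_n \rfloor + 1$ gives $i_n - 1 \leq x_n < i_n$, and this is what aligns the two regimes: integration intervals $[i-1,i]$ with $i \leq i_n - 1$ sit entirely in the decreasing region $(0, x_n)$ of $g_n$, and intervals with $i \geq i_n + 1$ sit entirely in the increasing region $(x_n, n-1)$.

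The key manipulation is a shift to compare consecutive terms. For $2 \leq i \leq n-1$, the change of variable $y = x - 1$ in the integral $\int_{i-1}^{i} g_n(x)\,dx$ gives
$$I_n(i) - I_n(i-1) = \int_{i-2}^{i-1}\bigl[g_n(y+1) - g_n(y)\bigr]\,dy,$$
so the sign of the difference is determined pointwise by the sign of $g_n(y+1) - g_n(y)$ on $[i-2, i-1]$.

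For $2 \leq i \leq i_n - 1$ and $y \in [i-2, i-1]$ one has $0 \leq y < y+1 \leq i_n - 1 \leq x_n$, so both $y$ and $y+1$ lie in $(0, x_n)$, where $g_n$ is strictly decreasing by Lemma~\ref{mgn}; thus the bracketed integrand is negative and $I_n(i) < I_n(i-1)$. Symmetrically, for $i_n + 2 \leq i \leq n-1$ and $y \in [i-2, i-1]$ one has $x_n < i_n \leq y < y+1 \leq n-1$, so both endpoints lie in $(x_n, n-1)$, where $g_n$ is strictly increasing, yielding $I_n(i) > I_n(i-1)$.

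I do not expect any real obstacle: the argument is a direct two-line consequence of Lemma~\ref{mgn} once the integral shift has been made. The only point worth mentioning is that $g_n$ has an integrable singularity at $0$ (since $1/2 - H > -1$), so the case $i = 2$ is treated with the integral $\int_0^{1}[g_n(y+1)-g_n(y)]\,dy$ being well defined, and the pointwise monotonicity argument is unaffected.
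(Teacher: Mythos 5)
Your argument is correct and is exactly the elaboration of what the paper leaves implicit: its proof of Corollary~\ref{iin} reads only ``directly from the definitions and Lemma~\ref{mgn}'', and the intended reasoning is precisely your observation that $I_n(i)=\int_{i-1}^i g_n(x)\,dx$ together with the shift $I_n(i)-I_n(i-1)=\int_{i-2}^{i-1}\bigl[g_n(y+1)-g_n(y)\bigr]\,dy$ and the placement $i_n-1\leq x_n< i_n$ reduces everything to the monotonicity of $g_n$ on $(0,x_n)$ and $(x_n,n-1)$. No gap; your remark about the integrable singularity at $0$ is a correct (and harmless) detail.
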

\begin{proof}
Directly from the definitions and Lemma \ref{mgn}.
\end{proof} 
The remaining results concern properties of the function $\rho_h$.
\begin{lemma}\label{mono}
For $k\geq 1$, the function $F_k:(\frac{1}{2},\frac{3}{4})\rightarrow \Rb_+$ given by $F_k(h):=\rho_h(k)$ is increasing. As a consequence:
$\lim_{h\rightarrow\frac{1}{2}+}\sum_{k=1}^\infty\rho_h^2(k)=0.$
\end{lemma}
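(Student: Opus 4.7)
The plan is to establish the monotonicity of each $F_k$ by a convexity/second-difference argument, and then deduce the limit by dominated convergence with respect to counting measure on $\Nb$.

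For the first claim, I would differentiate $F_k$ in $h$ and observe that
\[
F_k'(h) = (k+1)^{2h}\ln(k+1) + (k-1)^{2h}\ln(k-1) - 2k^{2h}\ln(k),
\]
with the convention $0\ln 0 = 0$, so the $k=1$ case reduces to $F_1'(h) = 2^{2h}\ln 2 > 0$ at once. For $k\geq 2$, setting $\phi_h(x) := x^{2h}\ln(x)$, the right-hand side is the second difference $\phi_h(k+1) - 2\phi_h(k) + \phi_h(k-1)$. A short computation gives
\[
\phi_h''(x) = x^{2h-2}\bigl[2h(2h-1)\ln(x) + 4h - 1\bigr],
\]
and for $h\in(\tfrac12,\tfrac34)$ and $x\geq 1$ the factors $2h(2h-1)$ and $4h-1$ are strictly positive while $\ln(x)\geq 0$, so $\phi_h$ is strictly convex on $[1,\infty)$ and the second difference is positive. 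Hence $F_k'(h)>0$ throughout $(\tfrac12,\tfrac34)$.

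For the consequence, the monotonicity just proved together with $\rho_{1/2}(k) = 0$ yields $\rho_h(k) \geq 0$ on $[\tfrac12,\tfrac34)$, so $h \mapsto \rho_h^2(k)$ is non-decreasing there with pointwise limit $0$ as $h\downarrow\tfrac12$. Fixing any $h_0\in(\tfrac12,\tfrac34)$, the bound $\rho_h^2(k) \leq \rho_{h_0}^2(k)$ for $h\leq h_0$ furnishes an integrable dominator, since the asymptotic $\rho_{h_0}(k)\sim h_0(2h_0-1)k^{2h_0-2}$ recalled after \eqref{drh} forces $\sum_k \rho_{h_0}^2(k)<\infty$. Dominated convergence on $\Nb$ then delivers $\lim_{h\to 1/2+}\sum_{k=1}^\infty \rho_h^2(k) = 0$.

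The only delicate point is verifying the sign of $\phi_h''$ on $[1,\infty)$; beyond that, the proof is essentially mechanical. The apparent singularity at $k=1$ (the term $0^{2h}\ln 0$) is harmless: either use $0\ln 0 = 0$ by continuous extension of $\phi_h$ at $0$, or bypass the general formula entirely by reading the $k=1$ case directly from the closed form $\rho_h(1) = 2^{2h-1} - 1$.
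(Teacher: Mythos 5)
Your proof is correct, but it reaches the monotonicity of $F_k$ by a different route than the paper. The paper factors $2F_k(h)=k^{2h}G_{1/k}(2h)$ with $G_\varepsilon(x)=(1+\varepsilon)^x+(1-\varepsilon)^x-2$, and shows $G_\varepsilon$ is increasing on $(1,\infty)$ by locating the unique critical point $x_*(\varepsilon)$ of $G_\varepsilon'$ and proving $x_*(\varepsilon)<1$ via the auxiliary function $g(x)=(1+x)\ln(1+x)+(1-x)\ln(1-x)$; the claim then follows because both factors are positive and increasing in $h$. You instead differentiate $F_k$ directly in $h$ and recognize $F_k'(h)$ as the second difference of $\phi_h(x)=x^{2h}\ln x$, reducing everything to the sign of $\phi_h''(x)=x^{2h-2}\bigl[2h(2h-1)\ln x+4h-1\bigr]$ on $[1,\infty)$, which is manifestly positive for $h>\tfrac12$; your handling of the degenerate $k=1$ term (either by the continuous extension $\lim_{x\to0+}x^{2h}\ln x=0$ or by reading off $\rho_h(1)=2^{2h-1}-1$) is sound. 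Your argument is arguably more elementary and self-contained, trading the paper's critical-point analysis for a one-line convexity check; the paper's factorization has the mild advantage of simultaneously exhibiting positivity of $\rho_h(k)$ from $G_{1/k}(1)=0$, which you recover instead from $\rho_{1/2}(k)=0$ plus monotonicity. For the limit, you use dominated convergence with the dominator $\rho_{h_0}^2(k)$ where the paper invokes monotone convergence; these are essentially the same step, and your verification that $\sum_k\rho_{h_0}^2(k)<\infty$ from the asymptotic $\rho_{h_0}(k)\sim h_0(2h_0-1)k^{2h_0-2}$ is exactly what makes either theorem applicable.
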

\begin{proof}
For $k=1$, we have $2 F_1(h)=2^{2h}-2$ and the result follows. For $k>1$, we note that:
$$2F_k(h)=k^{2h}G_{\frac{1}{k}}(2h),$$
where for $\varepsilon\in(0,1)$ and $x>1$, $G_\varepsilon(x):=(1+\varepsilon)^x+(1-\varepsilon)^x-2$.
We see that $G_\varepsilon^{'}(x)=\ln(1+\varepsilon)e^{x\ln(1+\varepsilon)}+\ln(1-\varepsilon)e^{x\ln(1-\varepsilon)}$
and that $G_\varepsilon^{'}(x_*(\varepsilon))=0$ if and only if: 
$$x_*(\varepsilon)=\frac{\ln\left(\frac{\ln(1/(1-\varepsilon))}{\ln(1+\varepsilon)}\right)}{\ln\left(\frac{1+\varepsilon}{1-\varepsilon}\right)}.$$
Moreover, since the function $g$ given by $g(x):=(1+x)\ln(1+x)+(1-x)\ln(1-x)$ is increasing in $[0,1)$, we deduce that $G'_{\ep}(1)=g(\ep)>g(0)=0$. The latter also implies that
$$\frac{\ln(1/(1-\varepsilon))}{\ln(1+\varepsilon)}<\frac{1+\varepsilon}{1-\varepsilon}.$$
Taking logarithm on both sides of the inequality, we conclude that $x_*(\varepsilon)<1$. In this way, we showed that $G_\varepsilon$ is increasing in $(1,\infty)$. Then the first assertion follows. The second one is obtained applying the monotone convergence theorem. 
\end{proof}

\begin{lemma}\label{lbr}
For any $h\in(\frac{1}{2},\frac{3}{4})$: 
$$\rho_h(k)\geq \frac{h (2h-1)}{2\,k^{2-2h}},\quad k\geq 1.$$
In particular:
$$\sum\limits_{k=1}^\infty\rho_h^2(k)\geq \frac{h^2 (2h-1)^2}{4}\, \zeta(4-4h),$$
where $\zeta$ is the Riemann zeta function. Consequently, $\lim_{h\rightarrow \frac{3}{4}-}\sum_{k=1}^\infty\rho_h^2(k)=\infty.$
\end{lemma}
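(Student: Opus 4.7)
\textbf{Proof plan for Lemma~\ref{lbr}.} The plan is to establish the pointwise lower bound on $\rho_h(k)$ by exhibiting $2\rho_h(k)$ as the integral of the second derivative of $f(x):=x^{2h}$ against a triangular probability kernel, then to square and sum term-by-term to recover the Riemann zeta function, and finally to invoke the simple pole of $\zeta$ at $s=1$ for the divergence as $h\to\tfrac34^-$.

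For the pointwise bound, Taylor's formula with integral remainder applied to $f$ around $x=k$ gives the symmetric representation
$$f(k+1)+f(k-1)-2f(k)\ =\ \int_{-1}^{1}(1-|t|)\,f''(k+t)\,dt,$$
where the triangular kernel $t\mapsto 1-|t|$ is a probability density on $[-1,1]$ with mean $0$. Here $f''(x)=2h(2h-1)\,x^{2h-2}$ is positive on $(0,\infty)$ and, for $h\in(\tfrac12,\tfrac34)$, \emph{convex} in $x$, because $(x^{2h-2})''=(2h-2)(2h-3)\,x^{2h-4}$ is the product of two negative factors times $x^{2h-4}>0$. Jensen's inequality therefore yields
$$\int_{-1}^{1}(1-|t|)\,f''(k+t)\,dt\ \geq\ f''(k)\ =\ 2h(2h-1)\,k^{2h-2},$$
and hence $\rho_h(k)\geq h(2h-1)\,k^{2h-2}\geq \frac{h(2h-1)}{2\,k^{2-2h}}$, which is in fact slightly stronger than the claim.

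Squaring and summing over $k\geq 1$ produces
$$\sum_{k=1}^{\infty}\rho_h^2(k)\ \geq\ \frac{h^2(2h-1)^2}{4}\sum_{k=1}^{\infty}\frac{1}{k^{4-4h}}\ =\ \frac{h^2(2h-1)^2}{4}\,\zeta(4-4h),$$
which is the stated series inequality; note $4-4h\in(1,2)$ for $h\in(\tfrac12,\tfrac34)$, so $\zeta(4-4h)<\infty$. For the divergence claim, $\zeta$ has a simple pole at $s=1$, so $\zeta(4-4h)\to+\infty$ as $h\to\tfrac34^-$, while the prefactor $h^2(2h-1)^2$ tends to $9/64>0$; hence the series diverges in this limit.

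No single step is a genuine obstacle; the main subtlety is the convexity check for $f''$ (a sign computation) and the identification of the triangular-kernel representation, which is standard. A fully elementary alternative avoids Jensen altogether: for $h\in(\tfrac12,\tfrac34)$ each coefficient $\binom{2h}{2j}$ is nonnegative, since the product $2h(2h-1)(2h-2)\cdots(2h-2j+1)$ contains the two positive factors $2h,2h-1$ and exactly $2j-2$ negative factors, making the product positive. Consequently the binomial expansion $(k+1)^{2h}+(k-1)^{2h}-2k^{2h}=2k^{2h}\sum_{j\geq 1}\binom{2h}{2j}k^{-2j}$ is termwise nonnegative, and retaining only $j=1$ immediately gives the bound. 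The case $k=1$ can either be handled by this same expansion (for $k>1$) plus a direct check, or directly by the integral representation, whose integrable singularity of $f''$ at $0$ does not affect Jensen's inequality.
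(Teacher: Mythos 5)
Your argument is correct, and its overall skeleton coincides with the paper's: a pointwise lower bound on $\rho_h(k)$ of the form $c_h\,k^{2h-2}$, squared and summed termwise to produce $\zeta(4-4h)$, followed by the simple pole of $\zeta$ at $1$ as $h\to\tfrac34^-$. Where you genuinely diverge is in the proof of the pointwise bound. The paper sets $f_h(x)=(1+x)^{2h}+(1-x)^{2h}-2-h(2h-1)x^2$ and shows $f_h\geq 0$ on $[0,1)$ by checking $f_h''\geq 0$ together with $f_h(0)=f_h'(0)=0$, which delivers exactly the stated constant $\tfrac{h(2h-1)}{2}$. You instead represent $2\rho_h(k)$ as the integral of $f''(x)=2h(2h-1)x^{2h-2}$ against the triangular kernel centred at $k$ and apply Jensen's inequality, using the convexity of $x\mapsto x^{2h-2}$ (which in fact holds for every $h<1$, not only $h<\tfrac34$). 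This buys you the sharper bound $\rho_h(k)\geq h(2h-1)k^{2h-2}$ --- twice the paper's constant, and the correct leading-order constant in view of the asymptotics $\rho_h(k)\sim h(2h-1)k^{2h-2}$ stated after \eqref{drh}; your binomial-series variant, resting on the sign count showing $\binom{2h}{2j}\geq0$ for $j\geq1$, gives the same improvement and is arguably the most elementary of the three arguments. The only delicate point is $k=1$, which you correctly flag: the paper is silent about it too (its $f_h$ is only defined on $[0,1)$, so $f_h(1/k)$ at $k=1$ needs a continuity remark), whereas in your setup the integrable singularity of $f''$ at $0$ is harmless for both the Taylor identity and Jensen since $2h-1>0$, and the binomial series still converges at the endpoint $x=1$ for exponent $2h>0$ (or one checks $2^{2h}-2\geq h(2h-1)$ directly). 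Everything downstream of the pointwise bound matches the paper.
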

\begin{proof}
Let's first prove that the function $f_h:[0,1)\rightarrow\Rb$ given by:
$$f_h(x):=(1+x)^{2h}+(1-x)^{2h}-2- h(2h-1)x^2$$
is positive. Note that $f_h^{'}(x)=2h \left((1+x)^{2h-1}-(1-x)^{2h-1}-(2h-1)x\right)$
and $f_h^{''}(x)=2h(2h-1) \left((1+x)^{2h-2}+(1-x)^{2h-2}-1\right)\geq 0.$
Since $f_h^{'}$ is increasing and $f_h^{'}(0)=0$, we conclude that $f_h^{'}$ is positive. Thus, $f_h$ is increasing and since $f_h(0)=0$, the claim is proved.
The result follows since:
$$\rho_h(k)= \frac{k^{2h}f_h\left(\frac{1}{k}\right)}{2}\, +\frac{h (2h-1)}{2\,k^{2-2h}}.$$
\end{proof}
As a consequence of the two previous results, we obtain the following corollary. 
\begin{corollary}\label{hc}
There exists $\frac{1}{2}<h_c<\frac{3}{4}$ such that:
\begin{enumerate}
 \item For all $h\in(\frac{1}{2},h_c)$: $\sum\limits_{k=1}^\infty\rho_h^2(k)<\frac14.$
 \item For all $h\in(h_c,\frac{3}{4})$: $\sum\limits_{k=1}^\infty\rho_h^2(k)>\frac14.$
\end{enumerate}
\end{corollary}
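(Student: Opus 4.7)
The plan is to set $S(h) := \sum_{k=1}^\infty \rho_h^2(k)$ for $h \in (1/2, 3/4)$, show that $S$ is continuous and strictly increasing, and apply the intermediate value theorem at the level $1/4$, using the endpoint limits provided by Lemmas \ref{mono} and \ref{lbr}.

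First I would record strict monotonicity: Lemma \ref{mono} gives that each $F_k(h) := \rho_h(k)$ is strictly increasing and positive on $(1/2, 3/4)$ (strictness is visible already from $2 F_1(h) = 2^{2h} - 2$), so each $F_k^2$ is strictly increasing, and hence so is the sum $S$. Next I would verify continuity of $S$ on $(1/2, 3/4)$. Each $F_k^2$ is smooth in $h$; on any compact subinterval $[a,b] \subset (1/2, 3/4)$, monotonicity yields the uniform bound $F_k^2(h) \leq \rho_b^2(k)$ for $h \in [a,b]$, and $\sum_k \rho_b^2(k) < \infty$ since $b < 3/4$. The Weierstrass $M$-test then delivers uniform convergence of the series on $[a,b]$, and hence continuity of $S$ there; since $[a,b]$ was arbitrary, $S$ is continuous on $(1/2, 3/4)$.

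Finally, I would invoke the endpoint asymptotics already proved: Lemma \ref{mono} yields $\lim_{h\to 1/2^+} S(h) = 0$ and Lemma \ref{lbr} yields $\lim_{h\to 3/4^-} S(h) = \infty$. Continuity and strict monotonicity of $S$ then give a unique $h_c \in (1/2, 3/4)$ with $S(h_c) = 1/4$, and strict monotonicity yields $S(h) < 1/4$ for $h \in (1/2, h_c)$ and $S(h) > 1/4$ for $h \in (h_c, 3/4)$, which is exactly the claim. I do not foresee a genuine obstacle: the corollary is essentially a packaging of the two previous lemmas with the intermediate value theorem, and the only point requiring mild care is the uniform summable bound used to pass from pointwise continuity of each $F_k^2$ to continuity of the infinite sum $S$.
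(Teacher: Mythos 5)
Your proof is correct and follows essentially the same route as the paper's: continuity of $h\mapsto\sum_{k=1}^\infty\rho_h^2(k)$ combined with monotonicity in $h$ (Lemma \ref{mono}), the endpoint limits from Lemmas \ref{mono} and \ref{lbr}, and the intermediate value theorem. Your justification of continuity via a Weierstrass $M$-test on compact subintervals, and your explicit remark on strict monotonicity (needed for the strict inequality on both sides of $h_c$), merely make precise what the paper's two-sentence proof leaves implicit.
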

\begin{proof}
First, we observe that the continuity and the monotonicity of the autocovariance functions with respect to $h$ and the monotone convergence theorem imply the continuity of the function $h\mapsto\sum_{k=1}^\infty\rho_h^2(k)$. The statements in the corollary follow from the mean value theorem and Lemmas \ref{mono} and \ref{lbr}.
\end{proof}

\bibliographystyle{acm}
\bibliography{reference}
\end{document}